\theoremstyle{plain}
\newtheorem*{thm A}{Theorem~A}
\newtheorem*{thm B}{Theorem~B}
\newtheorem*{thm C}{Theorem~C}
\newtheorem*{thm D}{Theorem~D}
\newtheorem*{thm E}{Theorem~E}
\newtheorem*{thm F}{Theorem~F}
\newtheorem*{thm 1}{Theorem~1}
\newtheorem*{thm 2}{Theorem~2}
\newtheorem*{thm 3}{Theorem~3}
\newtheorem*{pro A}{Proposition~A}
\newtheorem*{pro B}{Proposition~B}
\newtheorem*{pro C}{Proposition~C}
\newtheorem*{lem A}{Lemma~A}
\newtheorem*{lem B}{Lemma~B}
\newtheorem{theorem}{Theorem}[section]
\newtheorem{lemma}[theorem]{Lemma}
\newtheorem{remark}[theorem]{Remark}
\theoremstyle{plain}
\def \GBt{G_2({\mathbb C}^{m+2})}
\def \GBo{G_2({\mathbb C}^{m+1})}
\def \R{R_{\xi}}
\def \al{\alpha}
\def \N{\nabla}
\def \xn{{\xi}_{\nu}}
\def \xo{{\xi}_1}
\def \xt{{\xi}_2}
\def \xh{{\xi}_3}
\def \x{\xi}
\def \p{\phi}
\def \pn{\phi_{\nu}}
\def \po{\phi_{1}}
\def \e{\eta}
\def \en{\eta_{\nu}}
\def \QP{{\mathcal Q}^{\bot}}
\def \Q{\mathcal Q}
\def \E{\eta}
\def \EN{{\eta}_{\nu}}
\def \KN{{\xi}_{\nu}}
\def \Ph{{\phi}}
\def \PN{{\phi}_{\nu}}
\def \X{X_{0}}
\def \SN{\sum_{\nu=1}^3}
\begin{document}

\title[quadratic Killing structure Jacobi operator]{Quadratic Killing structure Jacobi operator for real hypersurfaces in complex two-plane Grassmannians}
  \vspace{0.2in}

\author[H. Lee, Y.J. Suh, \& C. Woo] {Hyunjin Lee, Young Jin Suh, and Changhwa Woo}

\address{\newline
Hyunjin Lee
\newline The Research Institute of Real and Complex Manifolds (RIRCM),
\newline Kyungpook National University,
\newline Daegu 41566, Repulic of Korea}
\email{lhjibis@hanmail.net}

\address{\newline
Young Jin Suh
\newline Department of Mathematics \& RIRCM,
\newline Kyungpook National University,
\newline Daegu 41566, Repulic of Korea}
\email{yjsuh@knu.ac.kr}

\address{\newline
Changhwa Woo
\newline Department of Applied Mathematics,
\newline Pukyong National University,
\newline Busan, Republic of Korea}
\email{legalgwch@pknu.ac.kr}

\footnotetext[1]{{\it 2010 Mathematics Subject Classification} :
Primary 53C40; Secondary 53C15.}
\footnotetext[2]{{\it Key words and phase} : Hopf real hypersurface, complex two-plane Grassmannians, quadratic Killing tensor, cyclic parallelism, structure Jacobi operator.  }

\thanks{* The first author was supported by grant Proj. No. NRF-2019-R1I1A1A-01050300, the
second by Proj. No. NRF-2018-R1D1A1B-05040381, and the third by  NRF-2020-R1A2C1A-01101518 from National Research Foundation of Korea.}

\begin{abstract}
In this paper, we introduce a notion of {\it quadratic Killing} structure Jacobi operator (simply, {\it Killing} structure Jacobi operator) and its geometric meaning for real hypersurfaces in the complex two-plane Grassmannians $\GBt$. In addition, we give a classification theorem for Hopf real hypersurfaces with {\it quadratic Killing structure Jacobi operator} in complex two-plane Grassmannians.
\end{abstract}

\maketitle

\section{introduction}\label{section 1}
\setcounter{equation}{0}
\renewcommand{\theequation}{1.\arabic{equation}}

In the class of complex Grassmannians of rank~2, usually we can give the example of a Hermitian symmetric space $G_{2}(\mathbb C^{m+2})=SU_{m+2}/S(U_2U_m)$, which is said to be {\it complex two-plane Grassmannians} of compact type. It is viewed as Hermitian symmetric spaces and quaternionic K\"{a}hler symmetric spaces equipped with the K\"{a}hler structure~$J$ and the quaternionic K\"{a}hler structure~$\mathcal J=\mathrm{span}\{J_{1}, J_{2}, J_{3}\}$ (see \cite{KoNo}, \cite{Suh2012}, and \cite{Suh2013}).

\vskip 6pt

Recall that a non-zero vector field~$X$ of Hermitian symmetric spaces $(\bar M, g)$ of rank~2 is called {\it singular} if it is tangent to more than one maximal flat in~$\bar M$. In particular, there are exactly two types of singular tangent vectors~$X$ of $G_{2}(\mathbb C^{m+2})$ which are characterized by the geometric properties $JX \in {\mathcal J}X$ and $JX \perp {\mathcal J}X$ (see \cite{B01} and \cite{BS1}).

\vskip 6pt

The Riemannian curvature tensor $\bar R$ of $\GBt$ is locally given by
\begin{equation}\label{eq: 1.1}
\begin{split}
& \bar R (X,Y)Z \\
&= g(Y,Z)X - g(X,Z)Y + g(JY, Z) JX  -  g(JX,Z)JY - 2g(JX, Y)JZ  \\
& \quad \quad + \sum_{\nu=1}^{3}\big \{ g(J_{\nu}Y, Z) J_{\nu} X - g(J_{\nu}X, Z) J_{\nu}Y - 2g(J_{\nu}X, Y) J_{\nu}Z \big \} \\
\end{split}
\end{equation}
\begin{equation*}
\begin{split}
& \quad \quad + \sum_{\nu=1}^{3}\big \{ g(J_{\nu}J Y, Z) J_{\nu} JX - g(J_{\nu}JX, Z) J_{\nu}JY \big \},
\end{split}
\end{equation*}
where $J_{1}$, $J_{2}$, $J_{3}$ is any canonical local basis of $\mathcal J$ and the tensor $g$ of type (0,2) stands for the Reimannian metric on complex two-plane Grassmannians $\GBt$ (see \cite{B01}, \cite{BS1}, and~\cite{BS2020}).

\vskip 6pt

For real hypersurfaces $M$ in complex two-plane Grassmannians $\GBt$, we have the following two natural geometric conditions: {\it the $1$-dimensional distribution $\mathcal C^{\bot}= \mathrm{span} \{\xi \}$ and the $3$-dimensional distribution $\mathcal {Q}^{\bot} = \mathrm{span}\{\xi_{1},\xi_{2}, \xi_{3}\}$ are invariant under the shape operator~$A$ of~$M$}. Here the almost contact structure vector field $\xi$ defined by $\xi = -JN$ is said to be a {\it Reeb} vector field, where $N$ denotes a local unit normal vector field of $M$ in complex two-plane Grassmannians $\GBt$, $m \geq 3$. The {\it almost contact 3-structure} vector fields $\xi_{1},\xi_{2},\xi_{3}$ spanning the 3-dimensional distribution $\mathcal{Q}^{\bot}$ of $M$ in $\GBt$ are defined by $\xi_{\nu} = - J_{\nu} N$ $(\nu=1, 2, 3$), where~$J_{\nu}$ denotes a canonical local basis of the quaternionic K\"{a}hler structure~$\mathcal J$, such that $T_{p}M = \mathcal Q \oplus \mathcal Q^{\bot} = \mathcal C \oplus \mathcal C^{\bot}$, $p \in M$. By using these invariant conditions for two kinds of distributions $\mathcal C^{\bot}$ and $\mathcal Q^{\bot}$ in $T_{p} \GBt$, Berndt and Suh gave the complete classification of real hypersurfaces in complex two-plane Grassmannians $\GBt$ as follows:
\begin{thm A}
[\cite{BS1}]\label{BS 1999}
Let $M$ be a connected real hypersurface in complex two-plane Grassmannians~$\GBt$, $m \geq 3$. Then both  $\mathcal C^{\bot}$ and  $\QP$ are invariant under the shape operator~$A$ of $M$ if and only if
\begin{enumerate}
\item [$(\mathcal T_{A})$] $M$ is an open part of a tube around a totally geodesic $\GBo$ in $\GBt$, or
\item [$(\mathcal T_{B})$]$m$ is even, say $m=2n$, and $M$ is an open part of a tube around a totally geodesic ${\mathbb H}P^n$ in $\GBt$.
\end{enumerate}
\end{thm A}

On the other hand, the Reeb vector field~$\xi$ is said to be {\it Hopf} if it is invariant under the shape operator~$A$, that is, $A\xi \in \mathcal C^{\bot}$. The 1-dimensional foliation of $M$ by the integral curves of the Reeb vector field $\xi$ is said to be a {\it Hopf foliation} of~$M$. We say that $M$ is a {\it Hopf real hypersurface} in $\GBt$ if and only if the Hopf foliation of $M$ is totally geodesic. By the almost contact metric structure $(\phi, \xi, \eta, g)$ and the formula $\nabla_{X}\xi=\phi AX$ for any $X \in TM$, it can be easily checked that $M$ is Hopf if and only if the Reeb vector field $\xi$ is Hopf, where the structure tensor $\p$ and the contact 1-form $\eta$ are defined by
$$
JX = \phi X + \eta(X) N, \ \ \mathrm{and} \ \  \eta(X)=g(X,\xi)
$$
respectively. In addition, when the distribution~$\mathcal Q^{\bot}$ of $M$ in complex two-plane Grassmannians $\GBt$ is invariant under the shape operator, we say that $M$ is a {\it $\mathcal Q^{\bot}$-invariant hypersurface}.

\vskip 6pt

Moreover, we say that the Reeb flow of $M$ in complex two-plane Grassmannians $\GBt$ is {\it isometric}, when the Reeb vector field~$\xi$ of $M$ is Killing. It implies that the metric tensor~$g$ of $M$ is invariant under the Reeb flow of $\x$, that is, $\mathcal L_{\xi}g =0$ where $\mathcal L_{\xi}$ is the Lie derivative along the direction of $\xi$. Related to this notion, for complex two-plane Grassmannians~$\GBt$ Berndt and Suh gave a remarkable characterization for real hypersurface of type~$(\mathcal T_{A})$ mentioned in Theorem~$\rm A$ (see~\cite{BS2}).

\vskip 6pt

Indeed, the notion of isometric Reeb flow is regarded as a typical example of Killing vector fields which are classical objects of differential geometry. As mentioned above, Killing vector fields are defined by vanishing of the Lie derivative of metric tensor~$g$ with respect to a vector~$X$, that is, $\mathcal L_{X}g=0$. On the other hand, applying such definition to certain tensors of spacetimes, we can be expressed  geometrical symmetries of the spacetimes. In general, the spacetime symmetries are used in the study of exact solutions of Einstein's field equations in general relativity. As an example of this study, in \cite{SCP} the spacetime fulfilling Einstein's field equations with vanishing of pseudo-quisi-conformal curvature tensor was considered and existence of Killing and conformal Killing vectors on such spacetime have been established.

\vskip 6pt

Now let us consider a generalization of such a Killing vector field on $(\bar M, g)$. A symmetric tensor field $\mathcal K$ of type (0,2) on $(\bar M, g)$ is said to be {\it Killing}, if it satisfies
$$
(\nabla_{X} \mathcal K) (Y, Z) + (\nabla_{Y} \mathcal K)(Z, X) + (\nabla_{Z} \mathcal K)(X,Y) =0
$$
for any vector fields $X$, $Y$, and $Z$. In general, a tensor field $\mathcal T$ of type $(0,k)$ on $(\bar M, g)$ is called {\it Killing tensor} if the complete symmetrization of $\nabla \mathcal T$ vanishes. This is equivalent to $(\nabla_{X}\mathcal T)(X, X, \cdots, X) = 0$. It follows again that for such a Killing tensor, the expression $\mathcal T (\dot{\gamma}, \dot{\gamma}, \cdots, \dot{\gamma})$ is constant along any geodesic $\gamma$ (see \cite{Semm03}). In particular, the existing literature on symmetric Killing tensors is huge, especially coming from theoretical physics (see  \cite{HMS} and \cite{Semm03}).

\vskip 6pt

As examples of such a symmetric Killing tensor, real hypersurfaces in complex two-plane Grassmannians $\GBt$ with {\it generalized Killing shape operator} were considered by Lee and Suh (see \cite{LS2020}). Recently, in \cite{S2020} Suh gave a classification for Hopf real hypersurfaces with {\it generalized Killing Ricci tensor} in $\GBt$. In the study of Generalized Robertson-Walker spacetimes (shortly, GRW spacetimes) in Lorentzian manifolds, the physical meaning of Killing tensor is used to be a perfect fluid spacetime for GRW (see \cite{MMSS}, \cite{REB}, and \cite{ShGh}).

\vskip 6pt

Motivated by such a symmetric Killing tensor, we consider the structure Jacobi operator $\R$, which is a symmetric tensor field of type (1,1) on $M$ in $\GBt$. It is said to be {\it quadratic Killing} structure Jacobi operator or {\it Killing} structure Jacobi operator (see \cite{MUS}, \cite{MUSM} and \cite{ShGh}), if the structure Jacobi operator~$\R$ satisfies
$$
g\big(({\nabla}_X \R)Y, Z \big) + g\big(({\nabla}_Y \R)Z, X \big) +g\big(({\nabla}_Z \R) X, Y \big) = 0
$$
for any $X$, $Y$, and $Z \in T_{p}M$, $ p \in M$. This cyclic parallelism of $\R$ is equivalent to
$$
g\big(({\nabla}_X \R) X, X \big)=0
$$
for any $X \in T_p M$, $p \in M$, by virtue of the linearization. Moreover, we give the geometric meaning of quadratic Killing structure Jacobi operator as follows: {\it Let $\gamma$ be any geodesic curve on $M$ such that $\gamma(0)=p$ and $\dot \gamma (0) = X$ as the initial conditions. Then the structure Jacobi curvature ${\mathbb R}_{\xi} (\dot \gamma , {\dot \gamma}) := g( \R(\dot \gamma) , \dot \gamma)$ is constant along the geodesic $\gamma$ of the vector field $X$}. Here we denotes ${\mathbb R}_{\xi}$ the structure Jacobi tensor of type (0,2) defined by ${\mathbb R}_{\xi}(X,Y) = g(\R X, Y)$ for any $X$ and $Y \in T_{p}M$, $p \in M$, which is symmetric (see Lemma~2.8 in \cite{Semm03}).

\vskip 6pt

From the assumption of the quadratic Killing structure Jacobi operator, first we assert that the unit normal vector field $N$ becomes singular as follows:
\begin{thm 1}\label{Theorem 1}
Let $M$ be a Hopf real hypersurface in complex two-plane Grassmannians $\GBt$ for $m \geq 3$. If $M$ has a quadratic Killing structure Jacobi operator, then the normal vector field~$N$ of $M$ is singular.
\end{thm 1}

Next, by using Theorem~1 we give a classification of Hopf real hypersurfaces in complex two-plane Grassmannians $\GBt$, $m \geq 3$, with quadratic Killing structure Jacobi operator as follows:
\begin{thm 2}\label{Theorem 2}
Let $M$ be a Hopf real hypersurface in complex two-plane Grassmannians $\GBt$, $m \geq 3$. Then the structure Jacobi operator~$\R$ of $M$ is quadratic Killing if and only if $M$ is locally congruent to an open part of a tube of $r = \frac{\pi}{4 \sqrt{2}}$ around a totally geodesic $\GBo$ in $\GBt$.
\end{thm 2}

\vskip 17pt

\section{Preliminaries}\label{section 2}
\setcounter{equation}{0}
\renewcommand{\theequation}{2.\arabic{equation}}

As mentioned in the introduction, the complete classifications of real hypersurfaces in complex two-plane Grassmannians $\GBt$, $m \geq 3$, satisfying two invariant conditions for the distributions $\mathcal C^{\bot}=\mathrm{span}\{\xi\}$ and $\QP=\mathrm{span}\{\xi_{1}, \xi_{2}, \xi_{3}\}$ was given in~\cite{BS1}.

\vskip 3pt

In fact, in \cite{B01} and \cite{BS1} Berndt and Suh gave the characterizations of the singular unit normal vector~$N$ of $M$ in complex two-plane Grassmannians~$\GBt$: \emph{There are two types of singular normal vector, those $N$ for which $JN \bot \mathcal J N$, and those for which $JN \in \mathcal J N$}. In other words, it means that $\xi \in \Q$ or $\xi \in \QP$ because $JN=-\xi$, $\mathcal JN=\mathrm{span}\{\xi_{1}, \xi_{2}, \xi_{3}\}=\QP$, and $TM=\Q \oplus \QP$. The following proposition tell us that the normal vector field $N$ on the model spaces of $(\mathcal T_{A})$ is singular of type of $JN \in \mathcal J N$, that is, $\xi \in \QP$.

\begin{pro A}[\cite{BS1}]\label{proposition A}
Let $(\mathcal T_{A})$ be the tube of radius $0 < r < \frac{\pi}{\sqrt{8}}$ around the totally geodesic $\GBo$ in $\GBt$. Then the following statements hold:
\begin{enumerate}[\rm (i)]
\item {$(\mathcal T_{A})$ is a Hopf hypersurface.}
\item {Every unit normal vector field $N$ of $(\mathcal T_{A})$ is singular and of type $JN \in \mathcal J N$.}
\item {The eigenvalues and their corresponding eigenspaces and multiplicities are given in Table~\ref{table 1}.
\begin{table}[ht]
\caption{\footnotesize Principal curvatures of a model space of type $(\mathcal T_{A})$}\label{table 1}
\begin{tabular}{p{0.8cm}p{3.2cm}p{5.5cm}p{1.5cm}}
\hline\noalign{\smallskip}
\footnotesize{Type} & \footnotesize{Eigenvalues} & \footnotesize{Eigenspace} & \footnotesize{Multiplicity}  \\
\noalign{\smallskip}\hline\noalign{\smallskip}
$ \scriptstyle (\mathcal T_{A})$ & $ \scriptstyle \alpha = \sqrt{8}\cot(\sqrt{8}r)$  &  $\scriptstyle T_{\al}=\mathcal C^{\bot}=\mathrm{span}\{\xi \} = \mathrm{span}\{\xi_{1} \}$ & $\scriptstyle 1$     \\
    $  $           & $ \scriptstyle \beta = \sqrt{2} \cot(\sqrt{2}r)$  & $ \scriptstyle T_{\beta}=\mathcal C \ominus \mathcal Q = \mathrm{span}\{\xi_{2}, \xi_{3} \}$   & $\scriptstyle 2$  \\
                  & $ \scriptstyle \lambda = -\sqrt{2}\tan(\sqrt{2}r)$ & $ \scriptstyle T_{\lambda}=E_{-1}= \{X \in \, \Q \,| \, \phi X= \phi_{1}X \}$ &  $\scriptstyle 2m-2$ \\
                  & $\scriptstyle \mu=0$             & $ \scriptstyle T_{\mu}=E_{+1}=\{X \in \, \Q \,| \phi X= -\phi_{1}X \}$ &   $\scriptstyle 2m-2$\\
\noalign{\smallskip}\hline \noalign{\smallskip}
\end{tabular}
\end{table}
}
\item[\rm (iv)] {The Reeb flow on $(\mathcal T_{A})$ is isometric.}
\end{enumerate}
\end{pro A}

On the other hand, by using the notion of isometric Reeb flow, that is, the shape operator~$A$ of a Hopf real hypersurface~$M$ in $\GBt$ commutes with structure tensor $\phi$, that is, $A \phi = \phi A$, Berndt and Suh gave:
\begin{equation}\label{e: 2.1}
\begin{split}
(\nabla_{X}A)Y &  = - \eta(Y) \phi X +(X\alpha)\eta(Y) \x + \alpha g(A \phi X, Y) \xi - g(A^{2} \phi X, Y) \xi \\
& \quad  - \sum_{i=1}^{3}\big\{ \en(Y)\pn X + g(\pn \xi, Y) \p \pn X + 2g(\pn \x, X) \p \pn Y  \\
& \quad \quad \quad  \quad  + g(\pn \xi, X) \en (Y) \xi - \en(\xi) g(\pn X, Y) \xi + g(\pn X, Y)\xn  \\
& \quad \quad \quad  \quad  - \e(X)\en(Y) \pn \x + g(\pn \p X, Y) \pn \x  \big \}
\end{split}
\end{equation}
for any tangent vector fields $X$ and $Y$ on $M$ (see Proposition~4 in \cite{BS2}). In fact, from (iv) in Proposition~$\rm A$, we see that the shape operator~$A$ of $(\mathcal T_{A})$ satisfies $A \phi = \phi A$. Thus, the above equation \eqref{e: 2.1} holds on $(\mathcal T_{A})$ and it can be rearranged as
\begin{equation}\label{e: 2.2}
\begin{split}
(\nabla_{X}A)Y &  = - \eta(Y) \phi X + \alpha g(A \phi X, Y) \xi - g(A^{2} \phi X, Y) \xi \\
& \quad \,  - \sum_{i=1}^{3}\big\{ \en(Y)\pn X + g(\pn \xi, Y) \p \pn X + 2g(\pn \x, X) \p \pn Y  \\
& \quad \quad \quad  \quad  + g(\pn \xi, X) \en (Y) \xi - \en(\xi) g(\pn X, Y) \xi + g(\pn X, Y)\xn  \\
& \quad \quad \quad  \quad  - \e(X)\en(Y) \pn \x + g(\pn \p X, Y) \pn \x  \big \}
\end{split}
\end{equation}
for any tangent vector fields $X$ and $Y$ on $T(\mathcal T_{A})=T_{\alpha}\oplus T_{\beta}\oplus T_{\lambda}\oplus T_{\mu}$.
\vskip 17pt

\section{Fundamental equations of real hypersrufaces in $\GBt$}\label{section 2-1}
\setcounter{equation}{0}
\renewcommand{\theequation}{3.\arabic{equation}}

We use some references~\cite{LS}, \cite{PeSu}, and \cite{S2013} to recall the Riemannian geometry of complex two-plane Grassmannians~$\GBt$, $m \geq 3$, and some fundamental formulas including the Codazzi and Gauss equations for a real hypersurface in $\GBt$.

\vskip 3pt

Let $M$ be a real hypersurface of complex two-plane Grassmannians~$\GBt$, $m \geq 3$, that is, a submanifold of $\GBt$ with real codimension one. The induced Riemannian metric on $M$ will also be denoted by $g$, and $\nabla$ denotes the Riemannian connection of $(M,g)$. Let $N$ be a local unit normal field of $M$ in $\GBt$ and $S$ the shape operator of $M$ with respect to $N$, that is, ${\bar \nabla}_{X}N = -SX$. The K\"{a}hler structure $J$ of complex two-plane Grassmannians~$\GBt$ induces on $M$ an almost contact metric structure $(\phi,\xi,\eta,g)$. Furthermore, let $J_1$, $J_2$, $J_3$ be a canonical local basis of the quaternionic K\"{a}hler structure~${\mathcal J}$. Then each $J_\nu$ induces an almost contact metric structure $(\phi_\nu,\xi_\nu,\eta_\nu,g)$ on $M$. Now let us put
\begin{equation}\label{eq: 3.1}
JX={\Ph}X+{\eta}(X)N,\quad J_{\nu}X={\Ph}_{\nu}X+{\eta}_{\nu}(X)N
\end{equation}
for any tangent vector $X$ of a real hypersurface $M$ in $\GBt$, where $N$ denotes a normal vector of $M$ in $\GBt$. Then the following identities can be proved in a straightforward method and will be used frequently in subsequent
calculations:
\begin{equation}\label{eq: 3.2}
\begin{split}
&{\phi}_{\nu +1}{\xi}_{\nu}=-{\xi}_{{\nu}+2},\quad {\phi}_{\nu}{\xi}_{{\nu}+1}={\xi}_{{\nu}+2}, \quad {\phi}{\xi}_{\nu}={\phi}_{\nu}{\xi},\quad {\eta}_{\nu}({\phi}X)={\eta}({\phi}_{\nu}X),\\
&{\phi}_{\nu}{\phi}_{{\nu}+1}X={\phi}_{{\nu}+2}X+{\eta}_{{\nu}+1}(X){\xi}_{\nu}, \quad {\phi}_{{\nu}+1}{\phi}_{\nu}X=-{\phi}_{{\nu}+2}X+{\eta}_{\nu}(X){\xi}_{{\nu}+1},
\end{split}
\end{equation}
where we have used that $J_{\nu}J_{\nu+1}=J_{\nu+2}= - J_{\nu+1}J_{\nu}$.

\vskip 3pt

On the other hand, from the parallelism of $J$ and $\mathcal J$ which are defined by
\begin{equation*}
\bar \nabla_{X} J =0 \ \ \mathrm{and} \ \ \bar \nabla_{X}J_{\nu} = q_{\nu+2}(X)J_{\nu+1} - q_{\nu+1}(X) J_{\nu+2} \ (\nu \ \mathrm{mod}\ 3),
\end{equation*}
together with Gauss and Weingarten formulas, it follows that
\begin{equation}\label{eq: 3.3}
({\N}_X{\Ph})Y={\E}(Y)AX-g(AX,Y){\xi},\quad {\nabla}_X{\xi}={\Ph}AX,
\end{equation}
\begin{equation}\label{eq: 3.4}
{\N}_X{\xi}_{\nu}=q_{{\nu}+2}(X){\xi}_{{\nu}+1}-q_{{\nu}+1}(X){\xi}_{{\nu}+2}
+{\Ph}_{\nu}AX,
\end{equation}
\begin{equation}\label{eq: 3.5}
\begin{split}
({\N}_X{\Ph}_{\nu})Y & =-q_{{\nu}+1}(X){\Ph}_{{\nu}+2}Y+q_{{\nu}+2}(X){\Ph}_{{\nu}+1}Y\\
& \quad \  +{\E}_{\nu}(Y)AX -g(AX,Y){\xi}_{\nu}.
\end{split}
\end{equation}
Combining these formulas, we find the following
\begin{equation}\label{eq: 3.6}
\begin{split}
{\N}_X({\Ph}_{\nu}{\xi}) &={\N}_X({\Ph}{\xi}_{\nu}) \\
& =({\N}_X{\Ph}){\xi}_{\nu}+{\Ph}({\N}_X{\xi}_{\nu})\\
& =q_{{\nu}+2}(X){\Ph}_{{\nu}+1}{\xi}-q_{{\nu}+1}(X){\Ph}_{{\nu}+2}{\xi}+{\Ph}_{\nu}{\Ph}AX -g(AX,{\xi}){\xi}_{\nu}+{\E}({\xi}_{\nu})AX.
\end{split}
\end{equation}
Moreover, from $JJ_{\nu}=J_{\nu}J$, ${\nu}=1,2,3$, it follows that
\begin{equation}\label{eq: 3.7}
{\Ph}{\Ph}_{\nu}X={\Ph}_{\nu}{\Ph}X+{\eta}_{\nu}(X){\xi}-{\eta}(X){\xi}_{\nu}.
\end{equation}

\vskip 3pt

Finally, using the explicit expression for the Riemannian curvature tensor $\bar{R}$ of complex two-plane Grassmannians $\GBt$ in the introduction, the Codazzi and Gauss equations of $M$ in $\GBt$, are given respectively by
\begin{equation}\label{eq: 3.8}
\begin{split}
(\nabla_XA)Y - (\nabla_YA)X
& = \eta(X)\phi Y - \eta(Y)\phi X - 2g(\phi X,Y)\xi \\
& \quad + \sum_{\nu=1}^3 \big\{\eta_\nu(X)\phi_\nu Y -
\eta_\nu(Y)\phi_\nu
X - 2g(\phi_\nu X,Y)\xi_\nu\big\} \\
& \quad + \sum_{\nu=1}^3 \big\{\eta_\nu(\phi X)\phi_\nu\phi Y
- \eta_\nu(\phi Y)\phi_\nu\phi X\big\} \\
& \quad + \sum_{\nu=1}^3 \big\{\eta(X)\eta_\nu(\phi Y) -
\eta(Y)\eta_\nu(\phi X)\big\}\xi_{\nu}
\end{split}
\end{equation}
and
\begin{equation}\label{eq: 3.9}
\begin{split}
& R(X,Y)Z - g(AY,Z)AX + g(AX,Z)AY \\
&= g(Y,Z)X - g(X,Z)Y + g({\phi}Y,Z){\phi}X - g({\phi}X,Z){\phi}Y - 2g({\phi}X,Y){\phi}Z\\
&\qquad + \SN \big \{g(\PN Y,Z)\PN X - g(\PN X,Z)\PN Y - 2g(\PN X,Y)\PN Z \big \} \\
&\qquad + \SN \big\{g(\PN{\phi}Y,Z)\PN {\phi}X - g(\PN{\phi}X,Z){\PN}{\phi}Y\big\}\\
&\qquad + \SN \big\{{\eta}(X){\EN}(Z){\PN}{\phi}Y - {\eta}(Y){\EN}(Z){\PN}{\phi}X\big\}\\
&\qquad + \SN \big\{{\eta}(Y)g({\PN}{\phi}X,Z) - {\eta}(X)g({\PN}{\phi}Y,Z) \big\}{\KN}
\end{split}
\end{equation}
for any tangent vector fields~$X$, $Y$ and $Z$ on $M$.

\vskip 3pt

On the other hand, we can derive some important facts from the geometric condition of $M$ being Hopf, that is, $A\xi = \al \xi$ where $\al = g(A\xi, \xi)$. Among them, we introduce the following formulas which are induced from the Codazzi equation:
\begin{lem A}[\cite{BS2}]
If $M$ is a connected orientable Hopf real hypersurface in complex two-plane Grassmannians $\GBt$, $m \geq 3$, then
\begin{equation}\label{eq: 3.10}
\mathrm{grad} \, \al = (\x \al)\x + 4\SN \en(\x)\p_{\nu} \x
\end{equation}
and
\begin{equation}\label{eq: 3.11}
\begin{split}
& 2 A \phi A X - \alpha A\phi X  - \al \phi A X \\
& \quad =  2 \phi X +  2 \SN \big\{ \en (X) \p_{\nu} \x - g(\phi_{\nu}\xi, X) \xi_{\nu} +
\en (\xi) \pn X  \big\} \\
\end{split}
\end{equation}
\begin{equation*}
\begin{split}
& \quad \quad  \quad -4 \SN  \big \{ \eta(X) \en (\xi) \pn \x - \en(\xi) g(\pn \xi, X)  \x  \big\},
\end{split}
\end{equation*}
for any tangent vector field $X$ on $M$ in $\GBt$.
\end{lem A}

\vskip 17pt

\section{Proof of Theorem~1}\label{section 4}

\setcounter{equation}{0}
\renewcommand{\theequation}{4.\arabic{equation}}

Let $M$ be a Hopf real hypersurface with quadratic Killing structure Jacobi operator in complex two-plane Grassmannians~$\GBt$, $m \geq 3$. From the notion of Killing structure Jacobi operator~$R_{\xi}$ the equivalent condition, so-called, {\it cyclic parallel structure Jacobi operator}~$\R$ can be given by
\begin{equation}\label{eq: Killing structure Ja op}
\begin{split}
& \mathfrak S_{X,Y,Z \in TM}\, g\big((\nabla_{X}\R)Y, Z \big) \\
& \ \  = g\big((\nabla_{X}\R)Y, Z \big) + g\big((\nabla_{Y}\R)Z, X \big)+g\big((\nabla_{Z}\R)X, Y \big)=0,
\end{split}
\tag{\dag}
\end{equation}
for any tangent vector fields $X$, $Y$, and $Z$ on $M$. The formula~\eqref{eq: Killing structure Ja op} is said to be {\it quadratic Killing} (or simply, {\it Killing}) structure Jacobi operator.

\vskip 6pt

From \eqref{eq: 3.9} the structure Jacobi operator~$\R \in \mathrm{End}(TM)$ is given as follows
\begin{equation}\label{eq: 4.1 structure Jacobi operator}
\begin{split}
R_{\xi}(Y) & = R(Y,\xi)\xi \\
& = Y - \eta(Y) \xi + \alpha AY - \alpha^{2} \e(Y) \xi \\
& \quad \  - \sum_{\nu=1}^{3} \big\{ \en(Y) \xn - \e(Y) \en(\xi) \xn - 3g(\pn \xi, Y) \pn \xi + \en(\xi) \pn \p Y \big \}
\end{split}
\end{equation}
for any tangent vector field $Y \in TM$ (see \cite{LS2017} and \cite{MPS 2015}).

\vskip 3pt

Taking the covariant derivative of \eqref{eq: 4.1 structure Jacobi operator} along the direction of $X$ implies
\begin{equation}\label{eq: 4.2}
\begin{split}
(\nabla_{X}R_{\xi})Y & = \nabla_{X}(\R Y) - \R (\nabla_{X}Y) \\
& = -g(\p AX, Y) \xi - \e(Y) \p AX \\
& \quad   - \sum_{\nu=1}^{3} \Big [ g(\pn AX, Y) \xn + 2\e(Y) g(\pn \x, AX) \xn  + \en(Y) \pn AX \\
& \quad \quad \quad \ \  +3 g(\pn AX, \p Y) \pn \xi + 3 \e(Y) \en(AX) \pn \x \\
& \quad \quad \quad \ \  - 3g(\pn \x, Y) \pn \p AX   + 3 \alpha \e (X) g(\pn \xi, Y) \xn \\
& \quad \quad \quad \ \  - 4 \en(\x) g(\pn \xi, Y) AX  - 4 \en(\x)g(AX, Y) \pn \x \\
& \quad \quad \quad \ \  - 2g(\pn \x, AX) \pn\p Y \Big ] \\
&\quad  + g((\nabla_{X}A)\xi, \xi)AY + \alpha (\nabla_{X}A)Y - \alpha g((\nabla_{X}A)Y, \xi)\xi \\
&\quad   - \alpha g(AY, \phi AX) \xi - \alpha \eta(Y) (\nabla_{X}A) \xi - \alpha \eta(Y) A \phi AX
\end{split}
\end{equation}
for any tangent vector fields $X$ and $Y$ on $M$ (see \cite{LS2017}). From this and using symmetric property of the structure Jacobi operator~$\R$ in $\GBt$, the quadratic Killing structure Jacobi operator~\eqref{eq: Killing structure Ja op} can be rearranged as follows:
\begin{equation}\label{eq: 4.3}
\begin{split}
0 &= g\big((\nabla_{X}\R)Y, Z \big) + g\big((\nabla_{Y}\R)Z, X \big)+g\big((\nabla_{Z}\R)X, Y \big)\\
& = g\big((\nabla_{X}\R)Y, Z \big) + g\big((\nabla_{Y}\R)X, Z \big) \\
&  \ \ -g(\p AZ, X) \eta(Y) - \e(X) g(\p AZ, Y) + g((\nabla_{Z}A)\xi, \xi)g(AX, Y) \\
& \ \ + \alpha g((\nabla_{Z}A)X, Y) - \alpha g((\nabla_{Z}A)X, \xi)\e(Y) \\
& \ \ - \alpha g(AX, \phi AZ) \e(Y) - \alpha \eta(X) g((\nabla_{Z}A) \xi, Y) - \alpha \eta(X) g(A \phi AZ, Y)\\
& \ \  - \sum_{\nu=1}^{3} \Big [ g(\pn AZ, X) \en(Y) + 2\e(X) g(\pn \x, AZ) \en(Y)  \\
& \quad \quad \quad \quad  + \en(X) g(\pn AZ, Y)  +3 g(\pn AZ, \p X) g(\pn \xi, Y) \\
& \quad \quad \quad \quad  + 3 \e(X) \en(AZ) g(\pn \x, Y)  - 3g(\pn \x, X) g(\pn \p AZ, Y)   \\
\end{split}
\end{equation}
\begin{equation*}
\begin{split}
& \quad \quad \quad \quad  + 3 \alpha \e (Z) g(\pn \xi, X) \en(Y)  - 4 \en(\x) g(\pn \xi, X) g(AZ, Y) \\
& \quad \quad \quad \quad   - 4 \en(\x)g(AZ, X) g(\pn \x, Y)  - 2g(\pn \x, AZ) g(\pn\p X, Y) \Big ] \\
& = g\big((\nabla_{X}\R)Y, Z \big) + g\big((\nabla_{Y}\R)X, Z \big) \\
&  \ \ +g(A\p X, Z) \eta(Y) + \e(X) g(A \p  Y, Z) + (\xi \alpha)  g(AX, Y)\eta(Z) \\
& \ \ - \alpha (\xi \alpha) \e(X) \e(Y) \eta(Z) + \alpha^{2} \e(Y) g(A \phi  X, Z) - \alpha \e(Y) g(A \phi A X, Z) \\
& \ \ + \alpha \e(Y) g(A \phi AX, Z)   + \alpha \eta(X) g(A \phi AY, Z)\\
& \ \ - \alpha (\xi \alpha) \eta(X)  \e(Y) \eta(Z)  + \alpha^{2} \eta(X) g(A \phi  Y, Z) - \alpha \eta(X) g(A \phi A Y, Z) \\
& \ \ + \alpha g((\nabla_X A)Y, Z)  + \alpha g( \phi X, Y)\eta(Z) + \alpha \eta(X)g(\phi Y,  Z) + 2\alpha \eta(Y) g(\phi X, Z) \\
& \ \  + \sum_{\nu=1}^{3} \Big [ \en(Y) g(A \pn X, Z)  - 2\e(X)\en(Y) g(A \pn \x, Z)   + \en(X) g(A \pn Y, Z) \\
& \quad \quad \quad \quad  + 3g(\pn \xi, Y) g(A \pn \p X, Z)  - 3 \e(X)  g(\pn \x, Y) g(A\xn, Z) \\
& \quad \quad \quad \quad  + 3g(\pn \x, X) g(A \p \pn Y, Z)  - 3 \alpha g(\pn \xi, X) \en(Y)\e(Z) \\
& \quad \quad \quad \quad  + 4 \en(\x) g(\pn \xi, X) g(AY, Z) + 4 \en(\x) g(\pn \x, Y)g(AX, Z)  \\
& \quad \quad \quad \quad  + 2 g(\pn\p X, Y)g(A \pn \x, Z) + 4 g(AX, Y)  \en(\xi) g(\pn \xi, Z)
\\
& \quad \quad \quad \quad
- 4 \alpha \e(X) \e(Y) \en(\xi) g(\pn \xi, Z)  - 4 \alpha \eta(X) \e(Y) \en(\xi) g(\pn \xi, Z) \Big ] \\
& \quad + \alpha \sum_{\nu=1}^3 \Big [ g(\phi_\nu X, Y)\eta_\nu(Z) + \eta_\nu(X)g(\phi_\nu Y, Z) +
 2\en(Y) g(\phi_\nu X,Z) \\
& \quad \quad \quad \quad
- g(\phi_\nu\phi X, Y)g(\pn \xi, Z) + g(\pn \xi, X)g(\phi \phi_\nu Y, Z) \\
& \quad \quad \quad \quad + \eta_\nu(\phi X)\en(Y) \eta(Z) + \eta(X)\en(Y) g(\pn \xi, Z)\Big ],
\end{split}
\end{equation*}
where we have used
\begin{equation*}
\begin{split}
g((\nabla_{Z}A)\xi, X) & = (Z \alpha) \e(X) - \alpha g(A \phi  X, Z) + g(A \phi A X, Z) \\
& = (\xi \alpha) \eta(Z) \e(X) + 4 \sum_{\nu=1}^{3} \en(\xi) g(\pn \xi, Z) \e(X)  \\
& \quad \ \ - \alpha g(A \phi X, Z) + g(A \phi A X, Z),
\end{split}
\end{equation*}
and
\begin{equation*}
\begin{split}
& g((\nabla_ZA)X, Y) \\
& = g((\nabla_X A)Z, Y)  + \eta(Z)g( \phi X, Y) - \eta(X)g(\phi Z, Y) - 2g(\phi Z,X)\eta(Y) \\
& \quad + \sum_{\nu=1}^3 \big\{\eta_\nu(Z)g(\phi_\nu X, Y) - \eta_\nu(X)g(\phi_\nu
Z, Y) - 2g(\phi_\nu Z,X)\en(Y) \big\} \\
& \quad + \sum_{\nu=1}^3 \big\{\eta_\nu(\phi Z)g(\phi_\nu\phi X, Y)
- \eta_\nu(\phi X)g(\phi_\nu\phi Z, Y) \big\} \\
\end{split}
\end{equation*}
\begin{equation*}
\begin{split}
& \quad + \sum_{\nu=1}^3 \big\{\eta(Z)\eta_\nu(\phi X) -
\eta(X)\eta_\nu(\phi Z)\big\}\en(Y)
\end{split}
\end{equation*}
for any tangent vector fields~$X$, $Y$, and $Z$ on $M$. Deleting $Z$ from \eqref{eq: 4.3} and using \eqref{eq: 4.2} gives
\begin{equation}\label{eq: 4.4}
\begin{split}
&  -g(\p AX, Y) \xi - \e(Y) \p AX -g(\p AY, X) \xi - \e(X) \p AY  + \eta(Y)A\p X \\
&  + \e(X) A \p  Y + (\xi \alpha)  g(AX, Y) \xi - 2 \alpha (\xi \alpha) \e(X) \e(Y) \xi + \alpha^{2} \e(Y) A \phi  X   \\
&  + \alpha^{2} \eta(X) A \phi Y + \alpha (\nabla_X A)Y  + \alpha g(\phi X, Y)\xi + \alpha \eta(X) \phi Y + 2\alpha \eta(Y) \phi X \\
&    - \sum_{\nu=1}^{3} \Big [ g(\pn AX, Y) \xn + 2\e(Y) g(\pn \x, AX) \xn  + \en(Y) \pn AX \\
& \quad \quad \quad   +3 g(\pn AX, \p Y) \pn \xi + 3 \e(Y) \en(AX) \pn \x - 3g(\pn \x, Y) \pn \p AX  \\
& \quad \quad \quad  + 3 \alpha \e (X) g(\pn \xi, Y) \xn - 4 \en(\x) g(\pn \xi, Y) AX \\
& \quad \quad \quad - 4 \en(\x)g(AX, Y) \pn \x - 2g(\pn \x, AX) \pn\p Y \\
& \quad \quad \quad  + g(\pn AY, X) \xn + 2\e(X) g(\pn \x, AY) \xn  + \en(X) \pn AY \\
& \quad \quad \quad  +3 g(\pn AY, \p X) \pn \xi + 3 \e(X) \en(AY) \pn \x - 3g(\pn \x, X) \pn \p AY  \\
& \quad \quad \quad  + 3 \alpha \e (Y) g(\pn \xi, X) \xn - 4 \en(\x) g(\pn \xi, X) AY \\
& \quad \quad \quad - 4 \en(\x)g(AY, X) \pn \x - 2g(\pn \x, AY) \pn\p X \Big ] \\
& + \sum_{\nu=1}^{3} \Big [ \en(Y) A \pn X  - 2\e(X)\en(Y) A \pn \x + \en(X) A \pn Y \\
& \quad \quad \quad  + 3g(\pn \xi, Y) A \pn \p X  - 3 \e(X)  g(\pn \x, Y) A\xn \\
& \quad \quad \quad  + 3g(\pn \x, X) A \p \pn Y - 3 \alpha g(\pn \xi, X) \en(Y)\xi \\
& \quad \quad \quad  + 4 \en(\x) g(\pn \xi, X) AY + 4 \en(\x) g(\pn \x, Y) AX \\
& \quad \quad \quad  + 2 g(\pn\p X, Y) A \pn \x  + 4 g(AX, Y)\en(\xi) \pn \xi
 \\
& \quad \quad \quad - 4 \alpha \e(X) \e(Y) \en(\xi) \pn \xi - 4 \alpha \eta(X) \e(Y) \en(\xi) \pn \xi \Big ] \\
& + \alpha \sum_{\nu=1}^3 \Big [ g(\phi_\nu X, Y) \xn + \eta_\nu(X) \phi_\nu Y +
 2\en(Y) \phi_\nu X - g(\phi_\nu\phi X, Y) \pn \xi\\
& \quad \quad \quad  \ \  + g(\pn \xi, X) \phi \phi_\nu Y + \eta_\nu(\phi X)\en(Y) \xi +
\eta(X)\en(Y) \pn \xi \Big ]\\
& + g((\nabla_{X}A)\xi, \xi)AY  - \alpha g((\nabla_{X}A)Y, \xi)\xi - \alpha g(AY, \phi AX) \xi \\
& - \alpha \eta(Y) A \phi AX  + g((\nabla_{Y}A)\xi, \xi)AX  - \alpha g((\nabla_{Y}A)X, \xi)\xi  \\
& - \alpha g(AX, \phi AY) \xi - \alpha \eta(X) A \phi AY + \alpha (\nabla_{X}A)Y - \alpha \eta(Y) (\nabla_{X}A) \xi \\
& + \alpha (\nabla_{Y}A)X  - \alpha \eta(X) (\nabla_{Y}A) \xi =0.
\end{split}
\end{equation}
On the other hand, by the Codazzi equation \eqref{eq: 3.8} and \eqref{eq: 3.10} in the latter part of \eqref{eq: 4.4}, we obtain
\begin{equation}\label{eq: 4.5}
\begin{split}
& g((\nabla_{X}A)\xi, \xi)AY  - \alpha g((\nabla_{X}A)Y, \xi)\xi - \alpha g(AY, \phi AX) \xi - \alpha \eta(Y) A \phi AX\\
&\  + g((\nabla_{Y}A)\xi, \xi)AX - \alpha g((\nabla_{Y}A)X, \xi)\xi - \alpha g(AX, \phi AY) \xi - \alpha \eta(X) A \phi AY\\
&\ + \alpha (\nabla_{X}A)Y + \alpha (\nabla_{Y}A)X  - \alpha \eta(Y) (\nabla_{X}A) \xi - \alpha \eta(X) (\nabla_{Y}A) \xi\\
& = (\x \alpha) \e(X) AY + 4 \sum_{\nu=1}^{3} \en(\x) g(\pn \x, X) AY - \alpha g(A\phi AX, Y) \xi \\
& \quad - \alpha \eta(Y) A \phi AX -\alpha (\xi \alpha) \eta(X) \e(Y)\xi  - 4\alpha \sum_{\nu=1}^{3} \en(\xi) g(\pn \x, X) \e(Y)\xi  \\
& \quad  - \alpha^{2} g(\p AX,Y) \xi + \alpha g(A \p AX, Y) \xi + (\x \alpha) \e(Y) AX \\
&  \quad + 4 \sum_{\nu=1}^{3} \en(\x) g(\pn \x, Y) AX + \alpha g(A\phi AX, Y) \xi \\
& \quad  - \alpha \eta(X) A \phi AY -\alpha (\xi \alpha) \e(X) \eta(Y) \xi \\
&  \quad  -4 \alpha \sum_{\nu=1}^{3} \en(\xi) \e(X) g(\pn \x, Y) \xi - \alpha^{2} g(\p AY,X)\xi + \alpha g(A \p AY, X) \xi \\
&  \quad + 2 \alpha (\nabla_X A)Y + \alpha \eta(Y)\phi X - \alpha \eta(X)\phi Y - 2\alpha g(\phi Y,X)\xi \\
&  \quad+ \alpha \sum_{\nu=1}^3 \big \{ \eta_\nu(Y)\phi_\nu X - \eta_\nu(X)\phi_\nu
Y - 2g(\phi_\nu Y,X)\xi_\nu  \eta_\nu(\phi Y)\phi_\nu\phi X \big \}\\
&  \quad + \alpha \sum_{\nu=1}^3 \big \{ - \eta_\nu(\phi X)\phi_\nu\phi Y + \eta(Y)\eta_\nu(\phi X)\xn - \eta(X)\eta_\nu(\phi Y)\xi_{\nu} \big \}\\
&  \quad-\alpha \eta(Y) \{(\xi \alpha) \e(X) \x + 4 \sum_{\nu=1}^{3} \en(\x) g(\pn \xi, X) \x + \alpha \p AX - A\p AX \big \}\\
&  \quad-\alpha \eta(X) \{(\xi \alpha) \e(Y) \x + 4 \sum_{\nu=1}^{3} \en(\x) g(\pn \xi, Y) \x + \alpha \p AY - A\p AY \big \}.\\
\end{split}
\end{equation}

From now on, we want to prove the normal vector field~$N$ of a Hopf real hypersurface~$M$ in $\GBt$ is singular. Then by the meaning of singular mentioned in the introduction, we see that either $\xi \in \Q$ or $\xi \in \QP$ where $\Q$ is the maximal quaternionic subbundle of $TM = \Q \oplus \QP$. In order to do this, we may put the Reeb vector field~$\xi$ as follows:
\begin{equation}\label{eq: Reeb vector}
\xi = \e(X_{0})\X + \e(\xo) \xo
\tag{*}
\end{equation}
for unit vector fields $\X \in \Q$ and $\xo \in \QP$ with $\e(\X) \e(\xo) \neq 0$. By using the notation~\eqref{eq: Reeb vector} we obtain that {\it the Reeb function~$\alpha$ is constant along the direction of $\xi$ if and only if the distribution~$\mathcal Q$- or the $\mathcal Q^{\bot}$-component of the structure vector field~$\xi$ is invariant by the shape operator, that is $A \X = \alpha \X$ and $A \xo = \alpha \xo$} (see \cite{JCPS2011} and \cite{LS2013}). From this fact, we obtain the following useful formulas for Hopf real hypersurfaces in $\GBt$.
\begin{lemma}\label{lem 4.1}
Let $M$ be a Hopf real hypersurface with non-vanishing geodesic Reeb flow in $\GBt$, $m \geq 3$. If the distribution $\mathcal Q$ or $\mathcal Q^{\bot}$ component of the structure vector field~$\xi$ by the shape operator, then the following formulas hold:
\begin{enumerate}[\rm (a)]
\item {$A \phi \X = \mu \p \X$},
\item {$A \p \xo = \mu \p \xo$, and}
\item {$A \po \X = \mu \po \X$}
\end{enumerate}
where the function $\mu$ is given by $\mu = \frac{\alpha^{2}+ 4 \eta^{2}(\X)}{\alpha}$.
\end{lemma}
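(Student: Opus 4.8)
The plan is to specialize the fundamental Hopf identity \eqref{eq: 3.11} of Lemma~A to the $\Q$-vector $\X$, after first extracting the algebraic consequences of the decomposition \eqref{eq: Reeb vector}.

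First I would normalize the canonical local basis $\{J_1,J_2,J_3\}$ of $\mathcal J$ so that the $\QP$-component of $\x$ points along $\xo$; then $\x=\e(\X)\X+\e(\xo)\xo$ with $\e^{2}(\X)+\e^{2}(\xo)=1$, $\en(\x)=\e(\xo)\,\delta_{\nu 1}$, and, because $\X\in\Q$, $\en(\X)=0$ for $\nu=1,2,3$. Applying $\p$ and $\po$ to \eqref{eq: Reeb vector} and using $\p\x=0$, $\po\xo=0$ together with $\p\xn=\pn\x$ from \eqref{eq: 3.2}, I get the two proportionalities
\[
\p\X=-\,\e(\xo)\,\po\X,\qquad\qquad \p\xo=\po\x=-\,\frac{\e(\X)}{\e(\xo)}\,\p\X .
\]
Since $\e(\X)\e(\xo)\neq 0$, these identify $\po\X$ and $\p\xo$, up to nonzero scalar factors, with $\p\X$; hence the three assertions (a), (b), (c) are mutually equivalent and it suffices to prove (a). I would also recall, from the paragraph preceding the lemma, that the hypothesis is equivalent to $\x\al=0$, which in turn holds if and only if $A\X=\al\X$ and $A\xo=\al\xo$, and these two eigenrelations are exactly what enters the computation below.

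To prove (a), I substitute $X=\X$ into \eqref{eq: 3.11}. Using $A\X=\al\X$, the left-hand side collapses to $\al\,A\p\X-\al^{2}\,\p\X$. On the right-hand side every summand carrying $\en(\X)$ drops out, and a short check using \eqref{eq: 3.2} and the proportionalities above shows $g(\pn\x,\X)=0$ for each $\nu$; the right-hand side therefore reduces to $2\,\p\X+2\,\e(\xo)\,\po\X-4\,\e(\X)\e(\xo)\,\po\x$. Substituting $\po\X=-\e(\xo)^{-1}\p\X$ and $\po\x=-\e(\X)\e(\xo)^{-1}\p\X$, the first two terms cancel and the right-hand side becomes $4\,\e^{2}(\X)\,\p\X$. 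Equating both sides yields $\al\,A\p\X=\big(\al^{2}+4\e^{2}(\X)\big)\p\X$, and since $M$ has non-vanishing geodesic Reeb flow we have $\al=g(A\x,\x)\neq 0$, so $A\p\X=\mu\,\p\X$ with $\mu=\big(\al^{2}+4\e^{2}(\X)\big)/\al$. Applying $A$ to the two proportionality relations then gives (b) and (c).

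The step I expect to be the main obstacle is the bookkeeping when specializing \eqref{eq: 3.11}: one must keep track of which $\xn$-valued and $\pn$-valued summands survive after $\en(\X)=0$, and verify $g(\pn\x,\X)=0$ for all $\nu$ — the cases $\nu=2,3$ use $\po\xt=\xh$ and $\po\xh=-\xt$ from \eqref{eq: 3.2} together with $\X\perp\QP$. Everything else is linear algebra forced by \eqref{eq: Reeb vector}, so once this reduction is done the proof should be short.
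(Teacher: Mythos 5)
Your proposal is correct and follows essentially the same route as the paper: specialize \eqref{eq: 3.11} to $X=\X$ using $A\X=\al\X$, reduce the right-hand side via the proportionalities $\p\X=-\e(\xo)\po\X$ and $\po\x=\e(\X)\po\X$ forced by \eqref{eq: Reeb vector}, and divide by $\al\neq 0$; parts (b) and (c) then follow exactly as you say. Your explicit remark that the three eigenrelations are mutually equivalent via the nonzero scalar factors is a clean way of packaging what the paper dispatches with ``similarly.''
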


\begin{proof}
Putting $X = \X$ in \eqref{eq: 3.11} and using $A\X = \alpha \X$, it yields
\begin{equation}\label{eq: 4.6}
\begin{split}
& \alpha A \phi \X =  \al^{2} \phi  \X + 2 \phi \X +  2 \e (\xo) \po \X -4 \eta(\X) \e (\xi_{1}) \po \x ,
\end{split}
\end{equation}
where we have used $g(\pn \xi, \X) =0$ for $\nu=1,2,3$ and $\e_{2}(\xi) = \e_{3}(\xi)=0$.

\vskip 3pt

On the other hand, by \eqref{eq: Reeb vector} we obtain
\begin{equation}\label{eq: 4.7}
\po \xi = \e(\X) \po \X + \e(\xo) \po \xo = \e(\X) \po \X.
\end{equation}
In addition, from \eqref{eq: Reeb vector} and $\phi_{1} \xi = \po \x$ we have
\begin{equation*}
\begin{split}
0  = \phi \xi & = \e(\X) \p \X + \e(\xo) \p \xo \\
& = \e(\X) \p \X + \e(\xo) \po \x \\
& = \e(\X) \p \X + \e(\xo) \e(\X) \po \X,
\end{split}
\end{equation*}
which means
\begin{equation}\label{eq: 4.8}
\p \X = - \e(\xo) \po \X
\end{equation}
because $\e(\X) \e(\xo)\neq 0$.  Substituting \eqref{eq: 4.7} and \eqref{eq: 4.8} to \eqref{eq: 4.6}, we get
\begin{equation*}
 \alpha A \phi \X  =  \al^{2} \phi  \X + 4 \eta^{2}(\X) \p \X = (\al^{2} + 4 \eta^{2}(\X)) \p \X.
\end{equation*}
Since $M$ has non-vanishing geodesic Reeb flow, we see that the vector field~$\phi \X$ is principal with corresponding principal curvature $\mu = \frac{\al^{2}+ 4 \eta^{2}(\X)}{\alpha}$.

\vskip 3pt

Similarly, using \eqref{eq: 4.7} and \eqref{eq: 4.8}, together with $\e(\X) \e(\xo) \neq 0$, the formula \eqref{eq: 4.6} gives (b) and~(c).
\end{proof}

When the Reeb function~$\alpha$ is vanishing, P\'{e}rez and Suh gave the following
\begin{lem B}[\cite{PeSu}]
Let $M$ be a Hopf real hypersurface in $\GBt$, $m \geq 3$. If $M$ has vanishing geodesic Reeb flow, then the unit normal vector field~$N$ of $M$ is singular, that is, either $\xi \in \mathcal Q$ or $\xi \in \mathcal Q^{\bot}$.
\end{lem B}
\begin{remark}\label{rem 4.2}{\rm
By using the method in the proof of Lemma~$\rm B$, we can assert that {\it if $M$ is a Hopf real hypersurface with constant Reeb curvature, then the unit normal vector field~$N$ of $M$ is singular}. In fact, since $M$ has constant Reeb function, \eqref{eq: 3.10} becomes
$$
4 \sum_{\nu=1}^{3} \en(\xi) \pn \x=0
$$
By using \eqref{eq: Reeb vector}, this equation yields $\e(\xo) \po \x =0$. From our assumption of $\e(X)\e(\xo) \neq 0$ and \eqref{eq: 4.7}, it leads to $\po \X =0$. Taking the inner product with $\po \X$, it implies
$$
g(\po \X, \po \X) = - g(\po ^{2}\X, \X) = g(\X, \X) - \big(\e_{1}(\X)\big)^{2}= 1,
$$
which gives us a contradiction. }
\end{remark}

By using Lemma~$\rm B$, in the latter part of this section we prove that the normal vector field~$N$ of $M$ is singular, when a Hopf real hypersurface~$M$ in $\GBt$ has non-vanishing geodesic Reeb flow~$\alpha=g(A\xi, \xi)$.
\begin{lemma}\label{lemma 4.3}
Let $M$ be a Hopf real hypersurface with non-vanishing geodesic Reeb flow in complex two-plane Grassmannians~$\GBt$, $m \geq 3$. If the structure Jacobi operator~$\R$ of $M$ is quadratic Killing, then the unit normal vector field~$N$ of $M$ is singular.
\end{lemma}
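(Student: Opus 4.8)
The plan is to argue by contradiction. Assume $N$ is not singular, so that $\xi$ decomposes as in \eqref{eq: Reeb vector} with unit vectors $\X\in\Q$, $\xo\in\QP$ and $\eta(\X)\eta(\xo)\neq 0$; put $a=\eta(\X)$, $b=\eta(\xo)$, so $a^{2}+b^{2}=1$ and $ab\neq 0$. By Lemma~$\rm B$ we may work on the open set on which $\al=g(A\xi,\xi)\neq 0$. Note first that, under \eqref{eq: Reeb vector}, formulas \eqref{eq: 3.10} and \eqref{eq: 4.7} give $\mathrm{grad}\,\al=(\xi\al)\,\xi+4ab\,\phi_{1}\X$; since $\eta_{1}(\X)=0$ we have $|\phi_{1}\X|^{2}=1$ and $\phi_{1}\X\perp\xi$, so $\mathrm{grad}\,\al$ is nonzero here and Remark~\ref{rem 4.2} cannot be invoked directly. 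Hence the first real task is to determine $\xi\al$, equivalently (by the fact recalled just after \eqref{eq: Reeb vector}) whether $A\X=\al\X$ and $A\xo=\al\xo$.

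I would obtain $\xi\al=0$ by feeding the Reeb direction into the Killing identity: taking $X=\xi$ in \eqref{eq: 4.4} --- with its last block replaced using \eqref{eq: 4.5} --- and using $\phi\xi=0$, $A\xi=\al\xi$, $(\nabla_{\xi}A)\xi=(\xi\al)\xi$ and $g(\phi_{\nu}\xi,\X)=0$, the identity collapses drastically, and the surviving $(\xi\al)$-terms, together with the $\SN$-terms evaluated on $\xi=a\X+b\xo$, should, after projecting onto a suitable direction, leave $\al\,(\xi\al)=0$, hence $\xi\al=0$. Then $A\X=\al\X$ and $A\xo=\al\xo$, so Lemma~\ref{lem 4.1} applies and gives $A\phi\X=\mu\phi\X$, $A\phi\xo=\mu\phi\xo$, $A\phi_{1}\X=\mu\phi_{1}\X$ with $\mu=(\al^{2}+4a^{2})/\al$.

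With these eigenrelations available, the concluding step is to substitute well-chosen pairs from $\{\X,\ \xo,\ \phi\X,\ \phi_{1}\X\}$ into \eqref{eq: 4.4}. Using the contact identities \eqref{eq: 3.2}, \eqref{eq: 3.7}, the relations \eqref{eq: 4.7}--\eqref{eq: 4.8}, the Hopf condition, and $A\X=\al\X$, $A\xo=\al\xo$, $A\phi\X=\mu\phi\X$, each substitution reduces \eqref{eq: 4.4}, after projecting onto one of $\xi$, $\xo$, $\X$ or $\phi\X$, to a scalar polynomial relation in $\al$, $\mu$, $a$ (with $b^{2}=1-a^{2}$). A short elimination among these relations, inserting $\mu=(\al^{2}+4a^{2})/\al$, should force an outcome incompatible with $ab\neq 0$ --- for instance $a=0$ or $b=0$, or an equation with no real solution such as $\al^{2}+4a^{2}=0$. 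This contradiction shows that $N$ must be singular, i.e.\ $\xi\in\Q$ or $\xi\in\QP$.

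The hard part is entirely computational: honestly reducing the long identities \eqref{eq: 4.4}--\eqref{eq: 4.5} under the ansatz \eqref{eq: Reeb vector}. The delicate bookkeeping is in the $\SN$-terms, since $\xo$ is distinguished --- one has $\eta_{2}(\xi)=\eta_{3}(\xi)=0$, $g(\phi_{\nu}\xi,\X)=0$ for every $\nu$, and $\phi_{1}\xi=a\,\phi_{1}\X$ --- while the $\nu=2,3$ summands still produce $\phi_{2}\X$- and $\phi_{3}\X$-type vectors that do not vanish and must be carried along. One further caution: at this stage $M$ is not yet known to be congruent to a model space, so beyond the Hopf condition only the partial information in Lemma~\ref{lem 4.1} on $A\phi\X$, $A\phi\xo$, $A\phi_{1}\X$ may be used; and the argument must genuinely establish $\xi\al=0$ before Lemma~\ref{lem 4.1} is invoked, rather than presupposing it.
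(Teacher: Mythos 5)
Your overall strategy coincides with the paper's: assume $\xi=\eta(\X)\X+\eta(\xo)\xo$ with $\eta(\X)\eta(\xo)\neq0$ as in \eqref{eq: Reeb vector}, deduce $A\X=\al\X$ and $A\xo=\al\xo$, invoke Lemma~\ref{lem 4.1}, and then extract a scalar contradiction from \eqref{eq: 4.4}. One remark on the first stage: you do not need to derive $\xi\al=0$ from the Killing hypothesis at all. The paper simply quotes the theorem of Lee and Loo \cite{LL2017} that $\xi\al=0$ holds for \emph{every} Hopf hypersurface in $\GBt$, so this step is free; your proposed computation with $X=\xi$ in \eqref{eq: 4.4} is both unnecessary and unverified (you only say the projection ``should'' leave $\al(\xi\al)=0$), though this is recoverable by citation.

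The genuine gap is in the concluding step. You assert that substituting pairs from $\{\X,\xo,\phi\X,\phi_1\X\}$ into \eqref{eq: 4.4} and projecting yields ``a scalar polynomial relation in $\al$, $\mu$, $a$,'' but \eqref{eq: 4.4} (after inserting \eqref{eq: 4.5}) still carries the term $3\al(\nabla_XA)Y$, and $(\nabla_XA)Y$ is \emph{not} a priori expressible through $\al$, $\mu$, $a$: it involves first derivatives of the shape operator that the pointwise eigenrelations of Lemma~\ref{lem 4.1} do not control. The paper resolves this by the specific choice $X=\X$, $Y=\xo$ and projection onto $\po\X$, evaluating the derivative term via \eqref{eq: 3.4} and $A\xo=\al\xo$ as $g((\nabla_{\X}A)\xo,\po\X)=\al g(\po A\X,\po\X)-g(A\po A\X,\po\X)=\al^{2}-\al\mu=-4\eta^{2}(\X)$; only with this input does the elimination close, producing $\eta^{2}(\X)\{-28\al^{2}+16\al^{2}\eta^{2}(\X)\}=0$, hence $\eta^{2}(\X)=7/4>1$, the contradiction. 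Your outline neither identifies this choice nor explains how the covariant-derivative term is to be handled, and ``should force an outcome incompatible with $ab\neq0$'' is exactly the assertion that requires proof. As written, the argument is a plausible plan whose decisive computation is missing.
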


\begin{proof}
In \cite{LL2017}, Lee and Loo show that {\it if $M$ is Hopf, then the Reeb function~$\alpha$ is constant along the direction of structure vector field~$\xi$}, that is, $\xi \alpha =0$. Then we see that the distribution $\mathcal Q$- and the $\mathcal Q^{\bot}$-component of $\xi$ is invariant by the shape operator~$A$, that is $A\X = \alpha \X$ and $A\xo = \alpha \xo$.

\vskip 3pt

Bearing in mind of these facts, putting $X=\X$ and $Y= \xo$ in \eqref{eq: 4.4} and using \eqref{eq: 4.5}, we obtain
\begin{equation*}
\begin{split}
& \quad  - \alpha \e(\X) \p \xo  + \mu \eta(\xo) \p \X + \mu \e(\X) \p  \xo + 3\alpha (\nabla_{\X} A)\xo  + 2\alpha \eta(\xo) \phi \X \\
&  \quad -\alpha^{3} \eta(\xo) \p \X + \mu \alpha^{2} \eta(\xo) \p \X - \alpha^{3} \eta(\X) \p \xo + \mu \alpha^{2} \eta(\X) \p \xo \\
& \quad + \sum_{\nu=1}^{3} \Big [ \alpha \en(\xo) \pn \X - 3 \alpha g(\pn \X, \p \xo) \pn \xi - 2 \alpha \e(\X) \en(\xo) \pn \x  + \en(\xo) A \pn \X \\
& \quad  \quad \quad \quad   - 2\e(\X)\en(\xo) A \pn \x - 8 \alpha \e(\X) \e(\xo) \en(\xi) \pn \xi + \alpha \eta_\nu(\xo)\phi_\nu \X \Big ] =0,
\end{split}
\end{equation*}
where we have used $g(\p \xo, \X) = - g(\phi \X, \xo)=0$ and
$$
g(\pn \X, \xo) = g(\pn \x, \X) = g(\pn \x, \xo) = g(\pn\p \X, \xo) =0
$$
for all $\nu=1,2,3$. Since $\e_{2}(\xi)=\e_{3}(\x)=0$, together with $g(\po \X, \po \X) = 1$, this equation can be rearranged as
\begin{equation}\label{eq: 4.9}
\begin{split}
& - \alpha \e(\X) \p \xo  + \mu \eta(\xo) \p \X + \mu \e(\X) \p  \xo + 3\alpha (\nabla_{\X} A)\xo  \\
& + 2\alpha \eta(\xo) \phi \X -\alpha^{3} \eta(\xo) \p \X + \mu \alpha^{2} \eta(\xo) \p \X \\
& - \alpha^{3} \eta(\X) \p \xo + \mu \alpha^{2} \eta(\X) \p \xo +  2 \alpha \po \X - 5 \alpha \e(\X)  \p \xi_{1}\\
&  + \mu \po \X - 2\mu \e(\X) \po \x - 8 \alpha \e(\X) \big(\e(\xo)\big)^{2} \po \xi  =0.
\end{split}
\end{equation}
From \eqref{eq: 4.7} and \eqref{eq: 4.8}, \eqref{eq: 4.9} becomes
\begin{equation}\label{eq: 4.10}
\begin{split}
& \e^{2}(\X) \big \{ - 6\alpha - \mu  - \alpha^{3} + \mu \alpha^{2} - 8 \alpha \e^{2}(\xo) \big\}\po \X  \\
& \quad  - \e^{2}(\xo)\big \{ \mu  + 2\alpha -\alpha^{3} + \mu \alpha^{2}   \big \} \po \X  \\
& \quad + ( 2 \alpha  + \mu ) \po \X  +  3\alpha (\nabla_{\X} A)\xo =0.
\end{split}
\end{equation}

On the other hand, from \eqref{eq: 3.4} and \eqref{eq: 3.10}, the assumption $A\xo = \alpha \xo$ yields
\begin{equation*}
\begin{split}
(\nabla_{X}A)\xi_{1}& = (X \alpha) \xo + \alpha \nabla_{X}\xo - A(\nabla_{X}\xo) \\
&  = (X \alpha) \xo + \alpha \{ q_{3}(X) \xt - q_{2}(X) \xh + \po AX\} \\
& \quad \ \ - q_{3}(X) A\xt + q_{2}(X) A\xh - A\po AX \\
&  = 4\e(\xo) g(\p \xi, X) \xo + \alpha \{ q_{3}(X) \xt - q_{2}(X) \xh + \po AX\} \\
& \quad \ \ - q_{3}(X) A\xt + q_{2}(X) A\xh - A\po AX
\end{split}
\end{equation*}
for any tangent vector field~$X$ on $M$. From this, taking the inner product with $\po \X$ to $\eqref{eq: 4.10}$ and \eqref{eq: 3.4}, together with $\alpha \mu = \alpha^{2}+ 4 \eta^{2}(\X)$, we get
\begin{equation}\label{eq: 4.11}
\begin{split}
& \e^{2}(\X) \big \{ - 14 \alpha - \mu  + 12 \alpha \e^{2}(\X)\big\}   - \e^{2}(\xo)\big \{ \mu  + 2\alpha + 4 \alpha \e^{2}(\X) \big \} \\
& \quad +  2 \alpha  + \mu -12 \alpha  \eta^{2}(\X)=0,
\end{split}
\end{equation}
where we have used $g(\po \X, \po \X) = 1$, $\eta^{2}(\X) + \eta^{2}(\xo)=1$, and
\begin{equation*}
\begin{split}
g((\nabla_{\X}A)\xi_{1}, \po \X)  & = \alpha g(\po A\X, \p_{1}\X) - g(A\po A\X, \p_{1}\X) \\
&  = \alpha^{2} - \alpha \mu  = -4 \eta^{2}(\X).
\end{split}
\end{equation*}
By using non-vanishing Reeb function $\alpha \neq 0$ and $ \alpha \mu = \alpha^{2}+ 4 \eta^{2}(\X)$, together with $\eta^{2}(\xo) = 1 - \eta^{2}(\X)$, \eqref{eq: 4.11} becomes
\begin{equation}\label{eq: 4.12}
\begin{split}
& \e^{2}(\X) \big \{ - 15 \alpha^{2} - 4 \eta^{2}(\X) + 12 \alpha^{2}\eta^{2}(\X)\big\}   \\
& \ \ - \e^{2}(\xo)\big \{ 3\alpha^{2} + 4 \eta^{2}(\X) + 4 \alpha^{2} \e^{2}(\X) \big \} +  3 \alpha^{2} +4\eta^{2}(\X) -12 \alpha^{2}  \eta^{2}(\X)\\
& =  \e^{2}(\X) \big \{ - 15 \alpha^{2} - 4 \eta^{2}(\X) + 12 \alpha^{2}\eta^{2}(\X)\big\} - 4 \alpha^{2} \e^{2}(\X)\\
& \quad  + \e^{2}(\X)\big \{ 3\alpha^{2} + 4 \eta^{2}(\X) + 4 \alpha^{2} \e^{2}(\X) \big \} -12 \alpha^{2}  \eta^{2}(\X)\\
& = \e^{2}(\X) \big \{ - 12 \alpha^{2} + 16 \alpha^{2}\eta^{2}(\X) \big\} - 16 \alpha^{2}\eta^{2}(\X) \   \\
&= \e^{2}(\X) \big \{ - 28 \alpha^{2} + 16 \alpha^{2}\eta^{2}(\X) \big\} =0.
\end{split}
\end{equation}
By virtue of $\xi = \e(\X) \X + \e(\xo)\xo$ in \eqref{eq: Reeb vector} for $\e(\X)\e(\xo) \neq 0$, and our assumption of non-vanishing geodesic Reeb flow, that is, $\alpha \neq 0$, \eqref{eq: 4.12} implies that $\e^{2} (\X) = \frac{7}{4}$. Since the structure vector field~$\xi$ is unit, we should have $\e^{2}(\X)+ \e^{2}(\xo) =1$. From these facts, we obtain $\eta^{2}(\xo) =~- \frac{3}{4}$. It makes a contradiction. This means that either $\xi=\e(\X)\X = \pm \X \in \Q$ or $\xi=\e(\xo)\xo = \pm \xo \in \QP$, which gives the unit normal tangent $N$ is singular.
\end{proof}

Summing up Lemmas~$\rm B$ and \ref{lemma 4.3}, we assert that our Theorem 1 in the introduction.

\vskip 17pt

\section{quadratic Killing structure Jacobi operator for $JN \in \mathcal J N$}\label{section 5}
\setcounter{equation}{0}
\renewcommand{\theequation}{5.\arabic{equation}}

Hereafter, let $M$ be a Hopf real hypersurface with quadratic Killing structure Jacobi operator in complex two-plane Grassmannians~$\GBt$ for $m \geq 3$. Then by Theorem~1 our discussions can be divided into two cases according as the Reeb vector field $\xi \in \QP$ or $ \xi \in \Q$.

\vskip 3pt

In this section, we consider the case of $\xi \in \QP$ (i.e. $JN \in \mathcal J N$ where $N$ is a unit normal vector field on $M$ in $\GBt$, $m \geq 3$). Since $\QP$ is 3-dimensional distribution defined by $\QP = \mathrm{span}\{\xo, \xt, \xh \}$, we may put $\xi = \xo$. From this, we give an important lemma as follows.
\begin{lemma}\label{lem 5.1}
Let $M$ be a real hypersurface in complex two-plane Grassmannians $\GBt$, $m \geq 3$. Let $J_{1} \in \mathcal J$ be the almost Hermitian structure such that $JN = J_{1}N$ (or $\xi = \xo)$. Then we obtain
\begin{equation*}
\phi AX = 2 g(AX, \xh)\xt - 2g(AX, \xt)\xh + \phi_{1}AX
\end{equation*}
for any tangent vector field~$X$ on $M$.
\end{lemma}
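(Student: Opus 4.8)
The plan is to check the identity by resolving everything along the $g$-orthogonal splitting $T_pM=\mathbb R\xi_1\oplus\mathbb R\xi_2\oplus\mathbb R\xi_3\oplus\mathcal Q$ adapted to the hypothesis $\xi=\xi_1$, and then expanding the vector $AX$ in this splitting; since both sides of the formula are $g$-linear in $AX$, it suffices to treat each summand separately, and only the $\mathcal Q$-summand will require genuine work. First I would record the elementary consequences of $\xi=\xi_1$: from $\eta(Y)=g(Y,\xi)=g(Y,\xi_1)=\eta_1(Y)$ we get $\eta=\eta_1$; then $\phi\xi=\phi_1\xi$ (a case of \eqref{eq: 3.2}) together with $\phi_1\xi=\phi_1\xi_1=0$ gives $\phi\xi=\phi_1\xi=0$; and specializing \eqref{eq: 3.7} to $\nu=1$, the two correction terms $\eta_1(X)\xi-\eta(X)\xi_1$ cancel, so that $\phi\phi_1=\phi_1\phi$ on all of $TM$.

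For the part of $AX$ lying in $\mathcal Q^{\bot}=\mathrm{span}\{\xi_1,\xi_2,\xi_3\}$ the verification is a direct application of \eqref{eq: 3.2}: $\phi\xi_1=0=\phi_1\xi_1$; $\phi\xi_2=\phi_2\xi=\phi_2\xi_1=-\xi_3$ while $\phi_1\xi_2=\xi_3$, so $(\phi-\phi_1)\xi_2=-2\xi_3$; and $\phi\xi_3=\phi_3\xi_1=\xi_2$ while $\phi_1\xi_3=-\xi_2$, so $(\phi-\phi_1)\xi_3=2\xi_2$. Since $2g(\xi_2,\xi_3)\xi_2-2g(\xi_2,\xi_2)\xi_3=-2\xi_3$ and $2g(\xi_3,\xi_3)\xi_2-2g(\xi_3,\xi_2)\xi_3=2\xi_2$, the asserted equality holds on $\mathcal Q^{\bot}$.

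The core of the argument is the $\mathcal Q$-component: one must show $\phi W=\phi_1 W$ for the part $W\in\mathcal Q$ of $AX$, because on $\mathcal Q$ the right-hand side is zero ($g(W,\xi_2)=g(W,\xi_3)=0$). Working inside $\mathcal Q$, where all the $\eta_\nu$ vanish, \eqref{eq: 3.2} reduces to the unperturbed relations $\phi_\nu\phi_{\nu+1}=\phi_{\nu+2}=-\phi_{\nu+1}\phi_\nu$, and subtracting \eqref{eq: 3.7} for $\nu=2,3$ from these shows that $\phi$ commutes with $\phi_2$ and $\phi_3$ on $\mathcal Q$ as well; together with $\phi\phi_1=\phi_1\phi$ and $\phi^2=\phi_1^2=-\mathrm{id}$ on $\mathcal Q$, this is the data I would feed into the comparison of $\phi|_{\mathcal Q}$ with $\phi_1|_{\mathcal Q}$. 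I expect this to be the main obstacle: $\phi-\phi_1$ does not vanish identically on $\mathcal Q$ — the bundle $\mathcal Q$ breaks up into the two subbundles $\{\phi=\phi_1\}$ and $\{\phi=-\phi_1\}$ (each of rank $2m-2$ in the model case, cf. Proposition~A) — so the identity for the particular vector $AX$ forces the $\{\phi=-\phi_1\}$-part of $AX$ to be absent, and this is exactly what has to be drawn from the structure of $A$ in the setting at hand (in the model space $(\mathcal T_{A})$ this subbundle is precisely the $0$-eigenspace of $A$ in the table of Proposition~A).

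Finally, writing $AX=\eta_1(AX)\xi_1+\eta_2(AX)\xi_2+\eta_3(AX)\xi_3+W$ with $W\in\mathcal Q$, inserting the three computations above and using $g(AX,\xi_\nu)=\eta_\nu(AX)$, reassembles the claimed identity $\phi AX=2g(AX,\xi_3)\xi_2-2g(AX,\xi_2)\xi_3+\phi_1 AX$.
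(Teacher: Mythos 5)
Your verification on $\mathcal Q^{\bot}$ is correct, and you have correctly isolated where the content of the lemma lies: writing $AX=\eta_1(AX)\xi_1+\eta_2(AX)\xi_2+\eta_3(AX)\xi_3+W$ with $W\in\mathcal Q$, the asserted identity is equivalent to $\phi W=\phi_1 W$. But at precisely this point your argument stops: you observe that $\mathcal Q$ splits into the two subbundles $\{\phi=\phi_1\}$ and $\{\phi=-\phi_1\}$, note that the identity forces the $\{\phi=-\phi_1\}$--component of $AX$ to vanish, and then defer to ``the structure of $A$ in the setting at hand,'' citing the eigenspace table of Proposition~A. That is a genuine gap, and it cannot be closed by pointwise algebra or by appeal to the model space: the lemma is stated for an \emph{arbitrary} real hypersurface with $JN=J_1N$, with no hypothesis on $A$ (not even Hopf), and at a single point there is no algebraic constraint forcing $(AX)_{\mathcal Q}$ into the subbundle $\{\phi=\phi_1\}$. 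Invoking Proposition~A would in any case be circular, since the lemma is used downstream to reach the conclusion that $M$ is of type $(\mathcal T_A)$.

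The missing ingredient is differential, not algebraic: the hypothesis $\xi=\xi_1$ is an identity of vector fields, so it may be covariantly differentiated. By \eqref{eq: 3.3} one has $\nabla_X\xi=\phi AX$, while by \eqref{eq: 3.4} one has $\nabla_X\xi_1=q_3(X)\xi_2-q_2(X)\xi_3+\phi_1AX$; equating these gives
\begin{equation*}
\phi AX-\phi_1AX=q_3(X)\xi_2-q_2(X)\xi_3\in\mathrm{span}\{\xi_2,\xi_3\},
\end{equation*}
whose $\mathcal Q$--component is exactly the statement $\phi W=\phi_1W$ that you were unable to derive. Taking inner products with $\xi_2$ and $\xi_3$ and using $\phi\xi_2=-\xi_3$, $\phi\xi_3=\xi_2$, $\phi_1\xi_2=\xi_3$, $\phi_1\xi_3=-\xi_2$ then identifies the connection $1$--forms as $q_3(X)=2g(AX,\xi_3)$ and $q_2(X)=2g(AX,\xi_2)$, which yields the stated formula. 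This is the paper's (one-line) proof; your decomposition of $AX$ and your computation on $\mathcal Q^{\bot}$ are consistent with it but do not replace the differentiation step.
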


\begin{proof}
Differentiating $\xi = \xo$ along any vector field~$X \in TM$ and using \eqref{eq: 3.4}, we obtain
\begin{equation}\label{e: 5.1}
\begin{split}
\p AX & = \nabla_{X}\xi \\
&  = \nabla_{X}\xo = q_{3}(X) \xt - q_{2}(X) \xh + \po AX.
\end{split}
\end{equation}
Taking the inner product of \eqref{e: 5.1} with $\xt$ and $\xh$, we obtain
\begin{equation*}
g(\p AX, \xt) = q_{3}(X) + g(\phi_{1} A\xi, \xt)
\end{equation*}
and
\begin{equation*}
g(\p AX, \xh) = -q_{2}(X) + g(\phi_{1} A\xi, \xh)
\end{equation*}
respectively. It follows that
\begin{equation*}
q_{3}(X) = 2 g(AX, \xh)  \quad \mathrm{and} \quad q_{2}(X) = 2 g(AX, \xt).
\end{equation*}
From this, \eqref{e: 5.1} becomes
\begin{equation}\label{e: 5.2}
\phi AX = 2 g(AX, \xh)\xt - 2g(AX, \xt)\xh + \phi_{1}AX
\end{equation}
for any tangent vector field~$X$ on $M$. Moreover, taking the symmetric part of \eqref{e: 5.2} we obtain
\begin{equation}\label{e: 5.3}
A \phi X = 2 \eta_{3}(X) A \xt - 2\eta_{2}(X) A \xh + A \phi_{1} X.
\end{equation}
\end{proof}

Then, by virtue of Lemma~\ref{lem 5.1}, we prove the following
\begin{lemma}\label{lem 5.2}
Let $M$ be a Hopf hypersurface with quadratic Killing structure Jacobi operator in complex complex two-plane Grassmannians $\GBt$, $m\geq3$. If the Reeb vector field $\xi$ belongs to $\QP$ (i.e. $\x =\xo$), then the distribution $\QP$ is invariant by the shape operator~$A$ of $M$, that is, $g(A\Q, \QP)=0$.
\end{lemma}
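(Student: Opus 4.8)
The goal is to show that when $\xi=\xo\in\QP$, the maximal quaternionic distribution $\Q$ and its complement $\QP$ do not mix under the shape operator, i.e. $g(A\xi_{2},\Q)=g(A\xi_{3},\Q)=0$ (equivalently $A\xi_{2},A\xi_{3}\in\QP$, since $A\xi=\alpha\xi=\alpha\xo$ is already in $\QP$). The natural strategy is to return to the master identity \eqref{eq: 4.4} (together with the Codazzi-derived identity \eqref{eq: 4.5}), specialize the free vector fields $X,Y$ to elements of $\QP$, and extract scalar equations by pairing with suitable test vectors. First I would substitute $\xi=\xo$ throughout, so that $\en(\xi)=\delta_{1\nu}$, $\eta(\xi_{1})=1$, $\eta(\xi_{2})=\eta(\xi_{3})=0$, and use Lemma~\ref{lem 5.1} to rewrite every occurrence of $\phi AX$ and $A\phi X$ in terms of $\phi_{1}AX$, $A\phi_{1}X$ and the $\xi_{2},\xi_{3}$-terms; this is the mechanism by which the hidden $\QP$-$\Q$ coupling $g(AX,\xi_{2})$, $g(AX,\xi_{3})$ enters the identities.

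The concrete computation I would run is to put $X=\xi_{2}$ (resp. $X=\xi_{3}$) and $Y=\xi$ in \eqref{eq: 4.4}, using $A\xi=\alpha\xi$, and then take the inner product with an arbitrary $Z\in\Q$ (so that all $\eta$-, $\eta_{\nu}$-type terms in $Z$ vanish and only the genuinely $\Q$-valued pieces survive). Because $\xi$ is Hopf, the Codazzi term $(\nabla_{\xi}A)\xi_{2}$ and $(\nabla_{X}A)\xi$ can be handled via \eqref{eq: 3.4}, \eqref{eq: 3.10}, \eqref{eq: 3.11} and Lemma~A; I expect, after collecting terms, a relation of the schematic form $c_{1}\,g(A\xi_{2},Z)+c_{2}\,g(\phi_{1}A\xi_{2},Z)=0$ for all $Z\in\Q$, with coefficients $c_{1},c_{2}$ polynomial in $\alpha$. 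Applying the same procedure with $\phi_{1}Z$ in place of $Z$ (legitimate since $\phi_{1}$ preserves $\Q$ when $\xi\in\QP$) yields a second relation with the roles of $A\xi_{2}$ and $\phi_{1}A\xi_{2}$ interchanged, and the resulting $2\times 2$ linear system forces $g(A\xi_{2},Z)=g(A\xi_{3},Z)=0$ for every $Z\in\Q$ provided the determinant $c_{1}^{2}+c_{2}^{2}$ (or whatever the analogous nonvanishing expression turns out to be) is nonzero — which should hold automatically, and in the borderline case one treats it separately using $\alpha$ being locally constant along $\xi$ (Lee--Loo) and the singularity of $N$ already established. Combined with $A\xi=\alpha\xi\in\QP$, this gives $A\QP\subset\QP$, i.e. $g(A\Q,\QP)=0$.

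The main obstacle will be bookkeeping in \eqref{eq: 4.4}: it is a long identity, and after the substitution $\xi=\xo$ and the Lemma~\ref{lem 5.1} rewriting, many terms that look like they involve $\Q$-$\QP$ mixing will cancel, so the real work is isolating the surviving linear combination of $g(A\xi_{2},Z)$ and $g(\phi_{1}A\xi_{2},Z)$ cleanly and verifying its coefficient does not vanish identically. A secondary subtlety is making sure that in the $(\nabla_{X}A)\xi$ terms one consistently uses the Hopf formulas so that no spurious $\alpha$-derivative terms remain; here the fact that $\xi\alpha=0$ (Lee--Loo, invoked in the proof of Lemma~\ref{lemma 4.3}) together with \eqref{eq: 3.10} reduces $\operatorname{grad}\alpha$ to a multiple of $\xi$, which keeps the bookkeeping manageable. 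Once the scalar equations are in hand the linear-algebra step is immediate.
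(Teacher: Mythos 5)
Your overall strategy---specialize the cyclic Killing identity with $\xi=\xo$, feed in Lemma~\ref{lem 5.1} to expose the couplings $g(AX,\xt)$, $g(AX,\xh)$, and use the Hopf/Codazzi formulas \eqref{eq: 3.4}, \eqref{eq: 3.10}, \eqref{eq: 3.11} to control $(\nabla A)\xi$---is the right circle of ideas, but as written the proposal is a plan rather than a proof, and the plan has a gap at exactly the decisive point. You never carry out the computation that produces the asserted relation $c_{1}\,g(A\xt,Z)+c_{2}\,g(\phi_{1}A\xt,Z)=0$; the coefficients $c_{1},c_{2}$ are not identified, and the claim that the resulting $2\times 2$ determinant ``should hold automatically'' is unjustified. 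This matters here more than usual: the lemma must hold without assuming $\alpha\neq 0$, and in fact the hypersurface ultimately singled out by Theorem~2 is the tube of radius $r=\tfrac{\pi}{4\sqrt 2}$, for which $\alpha=0$. Coefficients ``polynomial in $\alpha$'' are therefore entirely capable of degenerating precisely in the case one cares about, and your fallback (``treat the borderline case separately using Lee--Loo and the singularity of $N$'') does not explain how a degenerate system would be resolved---the singularity of $N$ is the standing hypothesis of this section, not an extra tool. A second, lesser concern is whether your particular specialization $X=\xt$, $Y=\xi$, $Z\in\Q$ extracts enough of the cyclic identity to pin down all of $g(A\xt,\Q)$ and $g(A\xh,\Q)$; this is plausible but not argued.

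For contrast, the paper avoids any nondegeneracy discussion by a different specialization and an algebraic trick. It contracts \eqref{eq: 4.4} with $Z=\xi$ while keeping $X,Y$ free, deletes $Y$ to obtain a vector identity \eqref{e: 5.5} valid for all $X$, and then rewrites that identity in two ways: once by substituting the Hopf relation \eqref{e: 5.6} (from \eqref{eq: 3.11}), giving \eqref{e: 5.8}, and once by substituting the Lemma~\ref{lem 5.1} relations \eqref{e: 5.9}--\eqref{e: 5.10}, giving \eqref{e: 5.11}. Subtracting the two yields $4A\phi X=4A\po X$ for every $X$---a conclusion with a universal, nonvanishing coefficient, valid in particular when $\alpha=0$---and then \eqref{e: 5.2} immediately forces $g(AX,\xt)=g(AX,\xh)=0$. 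To repair your argument you would need to either actually compute your $c_{1},c_{2}$ and verify the determinant is nonzero for all admissible $\alpha$ (including $\alpha=0$), or switch to an argument of the paper's type in which the final coefficient is manifestly nonzero.
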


\begin{proof}
By \eqref{eq: 3.10} we obtain $X \alpha = (\xi \alpha)\eta(X)$ for any $X \in TM$, when the Reeb vector field $\xi$ belongs to the distribution $\Q$. From this and taking the inner product of \eqref{eq: 4.4} with $\xi$, we have
\begin{equation*}
\begin{split}
& -g(\p AX, Y) + g(A \p X, Y) + (\xi \alpha)  g(AX, Y)  - \alpha (\xi \alpha) \eta(X) \e(Y)+ 3 \alpha^{2} g(\phi AX, Y)  \\
& - \alpha g(A\phi AX, Y) + 3\alpha g(\phi X, Y) +\alpha^{2} g(A \p X, Y)-\alpha^{2} g(\p AX,Y)  \\
& + \sum_{\nu=1}^{3} \Big [ -\en(\xi) g(\pn AX, Y)   -  g(\pn \x, AX)\en(Y) - 3 g(AX,\xn)g(\pn \x, Y)\\
& \quad \quad \quad \  + 4 \alpha \en(\x) \eta(X) g(\pn \x, Y)+ \en(\xi) g(A \pn X, Y)  - \en(X) g(\pn \xi,  AY) \\
& \quad \quad \quad \   - 3g(\pn \x, X)g(\xn,  AY)+ 4 \alpha \en(\xi) g(\pn \x, X) \e(Y)  \\
& \quad \quad \quad \ - 9 \alpha g(\pn \xi,  X)\en(Y)   - 3 \alpha \en(X) g(\pn \xi, Y )+ 3 \alpha \en(\x) g(\phi_\nu X, Y)  \Big ] =0,
\end{split}
\end{equation*}
where we have used
\begin{equation*}
\begin{split}
g((\nabla_{X}A)Y, \xi)  = g((\nabla_{X}A)\xi, Y) &= (X \alpha) \eta(Y) + \alpha g(\phi A X, Y) - g(A \phi AX, Y)\\
& = (\xi \alpha) \eta(X) \eta(Y) + \alpha g(\phi A X, Y) - g(A \phi AX, Y),
\end{split}
\end{equation*}
\begin{equation*}
g(\pn \p AX, \x) = g(\p \pn \x, AX) = g(\p^{2} \xn, AX) = -g(\xn, AX) + \alpha \eta(\xn) \eta(X),
\end{equation*}
and
\begin{equation*}
g(\pn \p X, \x) = g(\p^{2}\xn, X) = - \en(X) + \en(\xi) \e(X)
\end{equation*}
for any tangent vector fields $X$ and $Y$ on $M$.

\vskip 3pt

On the other hand, from the assumption $\xi = \xo \in \QP$ we get $\p_{2} \x = \p_{2} \xo = -\xh$ and $\p_{3} \x = \p_{3} \xo = \xt$. By using these formulas into the preceding equation, we get
\begin{equation}\label{e: 5.4}
\begin{split}
& -g(\p AX, Y) + g(A \p X, Y) + (\xi \alpha)  g(AX, Y)  - \alpha (\xi \alpha) \eta(X) \e(Y)\\
& + 2 \alpha^{2} g(\phi AX, Y) - \alpha g(A\phi AX, Y) + 3\alpha g(\phi X, Y) +\alpha^{2} g(A \p X, Y)  \\
& -g(\po AX, Y)  -  2 \e_{3}(AX) \e_{2}(Y) + 2 \eta_{2}(AX) \e_{3}(Y) \\
& + g(A\po X, Y) - 2 \e_{2}(X) g(A\xh, Y) + 2 \e_{3}(X) g(A\xt, Y) \\
& + 6\alpha \e_{3}(X)\e_{2}(Y) - 6\alpha \e_{2}(X) \e_{3}(Y) + 3 \alpha g(\phi_{1}X, Y) =0.
\end{split}
\end{equation}
Deleting $Y$ from \eqref{e: 5.4}, we get
\begin{equation}\label{e: 5.5}
\begin{split}
&- \p AX + A \p X + (\xi \alpha) AX - \alpha (\xi \alpha) \eta(X) \xi + 2 \alpha^{2} \phi AX - \alpha A\phi AX  +\alpha^{2} A \p X \\
& -\po AX -  2 \e_{3}(AX) \xt + 2 \eta_{2}(AX) \x_{3}  + A\po X - 2 \e_{2}(X) A\xh + 2 \e_{3}(X) A\xt \\
& + 3\alpha \big \{ 2\e_{3}(X)\x_{2} - 2\e_{2}(X) \x_{3} + \phi X + \phi_{1}X \big \}=0
\end{split}
\end{equation}
for any tangent vector field $X$ on $M$.

\vskip 3pt

On the other hand, when $\xi=\xo \in \Q$, \eqref{eq: 3.11} gives us
\begin{equation}\label{e: 5.6}
\p X + \po X - 2 \e_{2}(X)\xh + 2 \e_{3}(X)\xt =  A\phi AX - \frac{\alpha}{2} A \phi X - \frac{\alpha}{2} \phi AX
\end{equation}
for any tangent vector field~$X$ on $M$. Substituting \eqref{e: 5.6} into \eqref{e: 5.5}, it follows that
\begin{equation*}
\begin{split}
& -\p AX + A \p X + (\xi \alpha) AX - \alpha (\xi \alpha) \eta(X) \xi + 2 \alpha^{2} \phi AX - \alpha A\phi AX  +\alpha^{2} A \p X  \\
& -\po AX -  2 \e_{3}(AX) \xt + 2 \eta_{2}(AX) \x_{3}  + A\po X - 2 \e_{2}(X) A\xh + 2 \e_{3}(X) A\xt \\
& + 3\alpha \big \{ A\phi AX - \frac{\alpha}{2} A \phi X - \frac{\alpha}{2} \phi AX \big \}=0,
\end{split}
\end{equation*}
which implies
\begin{equation}\label{e: 5.7}
\begin{split}
& (-2 + 7\alpha^{2}) \p AX + (2- \alpha^{2}) A \p X + 2(\xi \alpha) AX - 2\alpha (\xi \alpha) \eta(X) \xi   \\
& \quad  + 4 \alpha A\phi AX  - 2 \big \{ \po AX + 2 \e_{3}(AX) \xt -2 \eta_{2}(AX) \x_{3}  \big \}  \\
& \quad + 2 \big \{ A\po X  - 2 \e_{2}(X) A\xh +2  \e_{3}(X) A\xt \big \} =0
\end{split}
\end{equation}
for any $X \in TM$. Bearing in mind of \eqref{e: 5.2} and \eqref{e: 5.3}, the above equation reduces to
\begin{equation}\label{e: 5.8}
\begin{split}
&(-4 + 7\alpha^{2}) \p AX + (4- \alpha^{2}) A \p X + 2(\xi \alpha) AX \\
& \quad - 2\alpha (\xi \alpha) \eta(X) \xi + 4 \alpha A\phi AX  =0.
\end{split}
\end{equation}
From \eqref{e: 5.2} and \eqref{e: 5.3}, we get
\begin{equation}\label{e: 5.9}
2 \eta_{3}(AX) \xt - 2 \eta_{2}(AX) = \phi AX - \phi_{1}AX
\end{equation}
and
\begin{equation}\label{e: 5.10}
2 \eta_{3}(X) A \xt - 2 \eta_{2}(X) A\xh = A \phi X - A \phi_{1}X,
\end{equation}
respectively. Substituting \eqref{e: 5.9} and \eqref{e: 5.10} into \eqref{e: 5.7}, it becomes
\begin{equation*}
\begin{split}
& (-2 + 7\alpha^{2}) \p AX + (2- \alpha^{2}) A \p X + 2(\xi \alpha) AX - 2\alpha (\xi \alpha) \eta(X) \xi + 4 \alpha A\phi AX   \\
& - 2 \big \{ \po AX + \phi AX - \phi_{1}AX \big \}  + 2 \big \{ A\po X  - A \phi X + A \phi_{1}X  \big \} =0,
\end{split}
\end{equation*}
which yields
\begin{equation}\label{e: 5.11}
\begin{split}
& (-4 + 7\alpha^{2}) \p AX - \alpha^{2} A \p X + 2(\xi \alpha) AX - 2\alpha (\xi \alpha) \eta(X) \xi \\
& \quad + 4 \alpha A\phi AX   + 4 A \po X =0.
\end{split}
\end{equation}
Subtracting \eqref{e: 5.11} from \eqref{e: 5.8}, we have $A \phi X = A \po X$, which means that $\phi A X = \po AX$ for any tangent vector field~$X$ of $M$. From this, \eqref{e: 5.2} becomes
\begin{equation}\label{e: 5.12}
g(A\xt, X) \xt - g(A\xt, X) \xh = 0
\end{equation}
for any tangent vector field $X$ of $M$. Taking the inner product of \eqref{e: 5.12} with $\xt$ (resp. $\xh$), we get the following for any tangent vector field $X$ of $M$
\begin{equation}
g(A\xt, X) = g(AX, \xt) = 0 \quad (\mathrm{resp.}\ \  g(A\xh, X) = g(AX, \xh) =0),
\end{equation}
which means that $g(A\Q, \QP)=0$. It gives a complete proof of Lemma~\ref{lem 5.2}.
\end{proof}

By Theorems~$\rm A$ and Propositions~$\rm A$, Lemma~\ref{lem 5.2} assure that {\it if a Hopf real hypersurface satisfies all  geometric conditions mentioned in Lemma~\ref{lem 5.2}, then $M$ is locally congruent to an open part of the model spaces of type~$(\mathcal T_{A})$}.

\vskip 6pt

From now on, we will check whether a real hypersurface of type~$(\mathcal T_{A})$ satisfy our hypothesis given in Lemma~\ref{lem 5.2}. By Proposition~$A$ mentioned in section~\ref{section 2}, we see that such real hypersurface is Hopf and its normal vector field satisfies $JN \in \mathcal JN$.

\vskip 3pt

In the remained part of this section, we want to check if the structure Jacobi operator~$\R$ for a model space of type~$(\mathcal T_{A})$ satisfies the cyclic parallelism. In order to do this, we want to find some necessary and sufficient conditions for structure Jacobi operator~$\R$ of a real hypersurface~$(\mathcal T_{A})$ to be quadratic Killing according to each eigenspace including the vector $Y$.

\vskip 3pt

From such a view point, first, we consider the following case.
\vskip 3pt

\noindent {\bf Case A.} $Y \in T_{\lambda}$

\vskip 2pt

In other words, from \eqref{eq: 4.4} and \eqref{eq: 4.5}, together with \eqref{e: 2.2}, the structure Jacobi operator~$\R$ of a real hypersurface of type $(\mathcal T_{A})$ satisfies the following for any tangent vector field $X \in T(\mathcal T_{A})$
\begin{equation}\label{e: 5.14}
\begin{split}
&  3\alpha ( \lambda^{2}- \alpha \lambda -2  )g(\phi Y, X) \xi  -2 (2 \alpha - \beta - \lambda) g(\p_{2}Y, X) \xt \\
& \quad -2 (2 \alpha - \beta - \lambda) g(\p_{3}Y, X) \xh -2 (2 \alpha - \beta - \lambda) \eta_{2}(X) \phi_{2}Y \\
& \quad -2 (2 \alpha - \beta - \lambda) \eta_{3}(X) \phi_{3}Y  =0,
\end{split}
\end{equation}
where $T(\mathcal T_{A})$ denotes a tangent space of type~$(\mathcal T_{A})$ and we have used $\phi \phi_{2} Y = \phi_{2} \phi Y = - \phi_{3}Y \in T_{\mu}$ and $\phi \phi_{3} Y = \phi_{3} \phi Y = \phi_{2}Y \in T_{\mu}$ for any $Y \in T_{\lambda}$.

\vskip 3pt

From now on, we want to check a solution of the equation \eqref{e: 5.14} to be satisfied for type~$(\mathcal T_{A})$. In fact, the left side of \eqref{e: 5.14} depend on the eigenspaces of $(\mathcal T_{A})$ is given as
\begin{equation*}
\mathrm{Left\ Side\ of\ } \eqref{e: 5.14} = \left\{ \begin{array}{cl}
0 & \mbox{for} \ X \in T_{\alpha} \\
-2 (2 \alpha - \beta - \lambda) \phi_{2}Y & \mbox{for} \ X =\xt \in T_{\beta} \\
-2 (2 \alpha - \beta - \lambda) \phi_{3}Y  & \mbox{for} \ X=\xh \in T_{\beta} \\
3\alpha ( \lambda^{2}- \alpha \lambda -2  )g(\phi Y, X) \xi  & \mbox{for} \ X \in T_{\lambda} \\
-2 (2 \alpha - \beta - \lambda) g(\p_{2}Y, X) (\xt + \xh) & \mbox{for} \ X \in T_{\mu}, \\
\end{array}\right.
\end{equation*}
for $Y \in T_{\lambda}$. By using $\alpha = 2 \sqrt{2} \cot (2 \sqrt{2}r)=\sqrt{2}(\cot (\sqrt{2}r) - \tan(\sqrt{2}r))$ and $\lambda = - \sqrt{2} \tan (\sqrt{2}r)$ with $r \in (0, \frac{\pi}{2 \sqrt{2}})$, we get
$\lambda^{2}- \alpha \lambda -2 =0$. From this, the previous formula follows
\begin{equation}\label{e: 5.15}
\begin{split}
& \mathrm{Left\ Side\ of\ } \eqref{e: 5.14} \\
& \quad \quad = \left\{ \begin{array}{cl}
0 & \mbox{for} \ X \in T_{\alpha} \\
-2 (2 \alpha - \beta - \lambda) \phi_{2}Y & \mbox{for} \ X =\xt \in T_{\beta} \\
-2 (2 \alpha - \beta - \lambda) \phi_{3}Y  & \mbox{for} \ X=\xh \in T_{\beta} \\
0 & \mbox{for} \ X \in T_{\lambda} \\
-2 (2 \alpha - \beta - \lambda) g(\p_{2}Y, X) (\xt + \xh) & \mbox{for} \ X \in T_{\mu}, \\
\end{array}\right.
\end{split}
\end{equation}
for $Y \in T_{\lambda}$.

\vskip 3pt

Bearing in mind of Proposition~$\rm A$, if $r=\frac{\pi}{4\sqrt{2}}$, then $2 \alpha - \beta - \lambda =0$. Hence, when $Y \in T_{\lambda}$, the structure Jacobi operator $\R$ is quadratic Killing if and only the radius~$r$ of the tube $(\mathcal T_{A})$ is $\frac{\pi}{4\sqrt{2}}$. It means $\alpha = \mu = 0$, $\beta = \sqrt{2}$, and $\lambda= - \sqrt{2}$.

\vskip 6pt

\noindent {\bf Case B.} $Y \in T_{\alpha} \oplus T_{\beta} \oplus T_{\mu}$

\vskip 2pt

Under these situations, we consider our problem for the other cases, that is, $Y \in T_{\alpha} \oplus T_{\beta} \oplus T_{\mu}$. Since $\alpha =0$, the left side of \eqref{eq: 4.4} becomes
\begin{equation}\label{e: 5.16}
\begin{split}
& \mathrm{Left\ Side\ of\ } \eqref{eq: 4.4} \\
& = -g(\p AX, Y) \xi - \e(Y) \p AX -g(\p AY, X) \xi \\
& \quad - \e(X) \p AY  + \eta(Y)A\p X + \e(X) A \p  Y \\
\end{split}
\end{equation}
\begin{equation*}
\begin{split}
& \quad   - \sum_{\nu=1}^{3} \Big [ g(\pn AX, Y) \xn + 2\e(Y) g(\pn \x, AX) \xn  + \en(Y) \pn AX \\
&  \quad \quad \quad \ \   +3 g(\pn AX, \p Y) \pn \xi + 3 \e(Y) \en(AX) \pn \x   \\
& \quad \quad \quad  \ \  - 3g(\pn \x, Y) \pn \p AX - 2g(\pn \x, AX) \pn\p Y  + \en(X) \pn AY  \\
&  \quad \quad \quad  \ \  - 2g(\pn \x, AY) \pn\p X + g(\pn AY, X) \xn + 2\e(X) g(\pn \x, AY) \xn  \\
& \quad \quad \quad   \ \   +3 g(\pn AY, \p X) \pn \xi + 3 \e(X) \en(AY) \pn \x - 3g(\pn \x, X) \pn \p AY \\
&  \quad \quad \quad  \ \    -\en(Y) A \pn X  + 2\e(X)\en(Y) A \pn \x - \en(X) A \pn Y \\
& \quad \quad \quad  \ \   - 3g(\pn \xi, Y) A \pn \p X  + 3 \e(X)  g(\pn \x, Y) A\xn \\
& \quad \quad \quad \ \ - 3g(\pn \x, X) A \p \pn Y  - 2 g(\pn\p X, Y) A \pn \x \Big ]
\end{split}
\end{equation*}
for any $X \in T(\mathcal T_{A})$ and $Y \in T_{\alpha} \oplus T_{\beta} \oplus T_{\mu}$.

\vskip 3pt

\noindent {\bf Subcase 1.} $Y = \x \in T_{\alpha}$

\vskip 2pt

Then, by using $\alpha = 0$ the left side of \eqref{e: 5.16} becomes
\begin{equation}\label{e: 5.17}
\begin{split}
&  - \p AX  +  A\p X   - \sum_{\nu=1}^{3} \big \{ g(A\pn  \xi, X) \xn  + \en(\xi) \pn AX + 3 g(A\xn, X) \pn \x \big \} \\
& \quad  \  + \sum_{\nu=1}^{3} \big \{ \en(X) A \pn \xi - 3g(\pn \x, X) A \xn  + \en(\xi) A \pn X  \big \}\\
& = - \p AX  +  A\p X - \po AX + A \phi_{1} X,
\end{split}
\end{equation}
where we have used $\phi_{2}\xi = - \xh$, $\phi_{3}\x = \xt$, and $\p \pn \xi = \p^{2}\xn = - \xn + \e(\xn)\xi$. According to the composition of the eigenspaces for $(\mathcal T_{A})$, we see that each eigenspace $T_{\sigma}$ of $(\mathcal T_{A})$ is $\phi$-(or $\po$-)invariant, that is, $\phi T_{\sigma} = \po T_{\sigma} = T_{\sigma}$. From this, \eqref{e: 5.17} vanishes on all eigenspaces of $(\mathcal T_{A})$. So, this means that the structure Jacobi operator~$\R$ is quadratic Killing when $Y \in T_{\alpha}$.

\vskip 6pt

\noindent {\bf Subcase 2.} $Y \in T_{\beta}$

\vskip 2pt

Since $T_{\beta}=\mathrm{span}\{\xt, \xh\}$, we have the following two subcases.

\vskip 3pt

\begin{itemize}
\item {$Y =\xt \in T_{\beta}$ \\
Using $\alpha = 0$ and \eqref{e: 5.16} can be rearranged as
\begin{equation}\label{e: 5.18}
\begin{split}
& 6 \beta \e_{3}(X) \xi + \beta\eta(X) \xh - \p_{2} AX + 3 \p_{3} \p AX \\
& \ \ + 2 \beta \p_{3}\p X + A\p_{2} X + 3 A \p_{3} \p X,
\end{split}
\end{equation}
for any eigenvector $X$ on $(\mathcal T_{A})$. It is well-known that for $X \in T_{\lambda}$ (resp. $X \in T_{\mu}$), by the straightforward calculation with \eqref{eq: 3.2}, we obtain
\begin{gather*}
\ \phi_{2}\p X \underset{X \in T_{\lambda}}= \phi_{2}\phi_{1}X \underset{\eqref{eq: 3.2}}{=} -\p_{3}X \in T_{\mu} \\
(\mathrm{resp.}\ \phi_{2}\p X \underset{X \in T_{\mu}}= -\phi_{2}\phi_{1}X \underset{\eqref{eq: 3.2}}{=} \p_{3}X \in T_{\lambda}), \\
\ \phi_{3}\p X \underset{X \in T_{\lambda}}= \phi_{3}\phi_{1}X \underset{\eqref{eq: 3.2}}{=} \p_{2}X \in T_{\mu} \\
\end{gather*}
\begin{gather*}
(\mathrm{resp.} \ \phi_{3}\p X \underset{X \in T_{\mu}}= -\phi_{3}\phi_{1}X \underset{\eqref{eq: 3.2}}{=} -\p_{2}X \in T_{\lambda} ),
\end{gather*}
and
\begin{gather*}
\phi X = \po X \in T_{\lambda} \ \ (\mathrm{resp.}\ \phi X = \po X \in T_{\mu}).
\end{gather*}
Bearing in mind such properties, together with $\beta=\sqrt{2}$ and $\lambda = - \sqrt{2}$, \eqref{e: 5.18} is identically vanishing for any tangent vector field $X$ on $(\mathcal T_{A})$. }
\item {$Y =\xh \in T_{\beta}$ \\
Similarly, from \eqref{e: 5.16} we obtain
\begin{equation}\label{e: 5.19}
\begin{split}
&-6 \beta \e_{2}(X) \xi - \beta \eta(X) \xt - \p_{3} AX  - 3 \p_{2} \p AX \\
& \quad - 2 \beta \p_{2}\p X + A\p_{3} X - 3 A \p_{2} \p X,
\end{split}
\end{equation}
for any eigenvector $X$ on $(\mathcal T_{A})$. More specifically, according to each eigenspace $T_{\alpha}$, $T_{\beta}$, $T_{\lambda}$ and $T_{\mu}$, it follows
\begin{equation*}
\eqref{e: 5.19} = \left\{ \begin{array}{ll}
- \beta \xt + A \phi_{3}\x = - \beta \xt + A \xt = 0 & \mbox{for} \ X \in T_{\alpha} \\
-6 \beta  \xi - \phi_{3} A \xt - 3 \phi_{2} \phi A \xt - 2 \beta \phi_{2} \phi \xt =0 & \mbox{for} \ X =\xt \in T_{\beta} \\
-3 \phi_{2}\phi A \xh - 2 \beta \phi_{2} \phi \x_{3} - 3 A \phi_{2} \phi \xh = 0  & \mbox{for} \ X=\xh \in T_{\beta} \\
- \lambda \phi_{3}X - 3\lambda \phi_{2}\phi X - 2 \beta \phi_{2} \phi X = 2 (\lambda + \beta)\phi_{3}X=0 & \mbox{for} \ X \in T_{\lambda} \\
- 2 \beta \p_{2} \p X + \lambda \phi_{3}X - 3 A \phi_{3}X =  -2\beta (\beta + \lambda) \phi_{3}X =0& \mbox{for} \ X \in T_{\mu} \\
\end{array}\right.
\end{equation*}
where we have used $\phi_{2}\phi \xt = - \phi_{2}\xh = -\x$, $\phi_{2}\phi \xt = \phi_{2}\xt =0$, $\beta= \sqrt{2}$ and $\lambda=-\sqrt{2}$. }
\end{itemize}

\vskip 6pt

\noindent {\bf Subcase 3.} $Y \in T_{\mu}$

\vskip 2pt

Since $Y \in T_{\mu}$, we see that $\mu=0$ and $\phi X = \phi_{1} X \in T_{\mu}$. From these properties, \eqref{e: 5.16} becomes
\begin{equation}\label{e: 5.20}
\begin{split}
& - \sum_{\nu=1}^{3} \big \{ g(\phi_{\nu}AX, Y) \xn + 3 g(\p_{\nu} AX, \p Y) \pn \x - 2g(\pn \x, AX) \pn \p Y \\
& \quad \quad \quad - \en(X) A \pn Y - 3g(\pn \x, X) A \p \pn Y - 2 g(\pn \p X, Y) A \pn \xi \big \} \\
& = -2 (\beta + \lambda) \big \{ g(\p_{2}X, Y) \xt + g(\p_{3}X, Y) \xh + \e_{2}(X) \p_{2}Y  + \e_{3}(X) \p_{3}Y \big\},
\end{split}
\end{equation}
where we have used $\p_{2} \p Y = \p_{3}Y \in T_{\lambda}$ and $\p_{3} \p Y = -\p_{2}Y \in T_{\lambda}$ for any eigenvector~$X$ on $(\mathcal T_{A})$ and $Y \in T_{\mu}$. Since $\beta= \sqrt{2}$ and $\lambda=-\sqrt{2}$, \eqref{e: 5.20} is identically vanishing for any tangent vector field~$X$ on
$(\mathcal T_{A})$.

\vskip 6pt

Summing up these discussions, we assert that {\it the structure Jacobi operator~$\R$ of a real hypersurface of type $(\mathcal T_{A})$ is quadratic Killing if and only if the radius $r$ of the tube around of type~$(\mathcal T_{A})$ is $\frac{\pi}{4\sqrt{2}}$}.

\vskip 17pt

\section{Quadratic Killing structure Jacobi operator for $JN \bot \mathcal J N$}\label{section 6}
\setcounter{equation}{0}
\renewcommand{\theequation}{6.\arabic{equation}}

Let $M$ be a Hopf real hypersurface with quadratic Killing structure Jacobi operator~$\R$ in complex two-plane Grassmannians $\GBt$, $m \geq 3$. Assume that the unit normal vector field~$N$ of $M$ satisfies $JN \bot {\mathcal J}N$ (i.e. $\xi \in \Q$). Related to the Reeb vector field $\xi$ of $M$ in $\GBt$, Lee and Suh gave:
\begin{thm B}[\cite{LS}]
Let $M$ be a connected orientable Hopf real hypersurface in complex two-plane Grassmannians of compact type $\GBt$, $m \geq 3$. Then the Reeb vector~$\xi$ belongs to the distribution $\mathcal Q$ if and only if $M$ is locally congruent to an open part of $(\mathcal T_{B})$: a tube around a totally geodesic $\mathbb H P^{n}$ in $\GBt$, where $m = 2n$.
\end{thm B}
\noindent By virtue of Theorem~$1$ and Theorem~$\rm B$, we assert that {\it a Hopf real hypersurface $M$ in complex two-plane Grassmannians $\GBt$, $m \geq 3$, satisfying the hypothesis in our Theorem~$2$ is locally congruent to an open part of the model space mentioned in Theorem~$\rm B$}. Hereafter, conversely, let us check whether the structure Jacobi operator~$\R$ of the model space of type~$(\mathcal T_{B})$ satisfies our assumption of Killing structure Jacobi operator.

\vskip 6pt

In order to do this, we introduce a proposition given in~\cite{Suh2006} as follows:
\begin{pro B}\label{Proposition B}
Let $M$ be a connected real hypersurface in complex two-plane Grassmannians $\GBt$. Suppose that $A\Q \subset \Q$, $A\xi = \alpha \xi$, and $\xi$ is tangent to $\Q$. Then the quaternionic dimension $m$ of $\GBt$ is even, say $m =2n$, and $M$ has five distinct constant principal curvatures
\begin{equation*}
\alpha = -2 \tan(2 r), \  \beta= 2 \cot (2r), \  \gamma=0, \   \lambda = \cot (r), \  \mu = - \tan (r),
\end{equation*}
with some $r \in (0, \frac{\pi}{4})$. The corresponding multiplicities are
\begin{equation*}
m(\alpha) =1, \ \  m(\beta)=3=m(\gamma) \ \  m(\lambda) = 4n-4 = m(\mu)
\end{equation*}
and the corresponding eigenspaces are
\begin{equation*}
\begin{split}
&T_{\alpha} = \mathbb R \xi = \mathcal C^{\bot} = \mathrm{span}\{\xi\}, \\
&T_{\beta} = \mathcal J J\xi = \mathrm{span}\{\xo, \xt, \xh\}, \\
&T_{\gamma}  = \mathcal J \xi = \mathrm{span}\{\p \xo, \p \xt, \p \xh\},\\
&T_{\lambda}, \ \  T_{\mu},
\end{split}
\end{equation*}
where
$$
T_{\lambda} \oplus T_{\mu} = TM \ominus (\mathbb R \xi \oplus \mathcal J  J \xi ), \ \  \mathcal J T_{\lambda}=  T_{\lambda}, \ \  \mathcal J T_{\mu} = T_{\mu},\ \   J T_{\lambda} =  T_{\mu}.
$$
\end{pro B}

\noindent In order to check the converse part, we assume that the structure Jacobi operator~$\R$ of our model space of type~$(\mathcal T_{B})$ satisfies the property of quadratic Killing. Accordingly, by $A \phi \x_{\nu} =0$ for $\nu = 1,2,3$, the property~\eqref{eq: Killing structure Ja op} can be rearranged as
\begin{equation}\label{eq: 6.1}
\begin{split}
& g(X, A\p Y) \xi - \e(Y) \p AX -g(X, \p AY) \xi - \e(X) \p AY  + \eta(Y)A\p X \\
& + \e(X) A \p  Y  + \alpha^{2} \e(Y) A \phi  X  + \alpha^{2} \eta(X) A \phi Y + 3\alpha (\nabla_X A)Y   \\
& + \alpha g(\phi X, Y)\xi + 3\alpha \eta(Y) \phi X +\alpha^{2}g(X,A \p Y)\xi -\alpha^{2} g(\p AY,X)\xi \\
& - 2 \alpha g(\phi Y,X)\xi  -\alpha ^{2}\eta(Y) \p AX - \alpha^{2} \eta(X) \p AY \\
&   + \sum_{\nu=1}^{3} \Big [ -g(\pn AX, Y) \xn  - \en(Y) \pn AX -3 g(\pn AX, \p Y) \pn \xi \\
& \quad \quad \quad  \ - 3 \e(Y) \en(AX) \pn \x  + 3g(\pn \x, Y) \pn \p AX  \\
& \quad \quad \quad  \ - 3 \alpha \e (X) g(\pn \xi, Y) \xn + 2g(\pn \x, AX) \pn\p Y \\
\end{split}
\end{equation}
\begin{equation*}
\begin{split}
& \quad \quad \quad  \ - g(\pn AY, X) \xn - 2\e(X) g(\pn \x, AY) \xn  - \en(X) \pn AY \\
& \quad \quad \quad   \ -3 g(\pn AY, \p X) \pn \xi - 3 \e(X) \en(AY) \pn \x + 3g(\pn \x, X) \pn \p AY  \\
& \quad \quad \quad   \ - 3 \alpha \e (Y) g(\pn \xi, X) \xn + 2g(\pn \x, AY) \pn\p X +\en(Y) A \pn X  \\
& \quad \quad \quad  \ + \en(X) A \pn Y  + 3g(\pn \xi, Y) A \pn \p X - 3 \e(X)  g(\pn \x, Y) A\xn\\
& \quad \quad \quad   \ + 3g(\pn \x, X) A \p \pn Y  - 3 \alpha g(\pn \xi, X) \en(Y)\xi + \alpha g(\phi_\nu X, Y) \xn  \\
& \quad \quad \quad  \  + 2\alpha \en(Y) \phi_\nu X - \alpha g(\phi_\nu\phi X, Y) \pn \xi + \alpha g(\pn \xi, X) \phi \phi_\nu Y + \alpha \eta_\nu(\phi X)\en(Y) \xi\\
& \quad \quad \quad   \ +\alpha \eta(X)\en(Y) \pn \xi + \alpha \eta_\nu(Y)\phi_\nu X  - 2 \alpha g(\phi_\nu Y,X)\xi_\nu  \ + \alpha \eta_\nu(\phi Y)\phi_\nu\phi X \\
&  \quad \quad \quad \
 - \alpha \eta_\nu(\phi X)\phi_\nu\phi Y + \alpha \eta(Y)\eta_\nu(\phi X)\xn - \alpha \eta(X)\eta_\nu(\phi Y)\xi_{\nu} \Big ]=0
\end{split}
\end{equation*}
for any tangent vector field $X$ on type~$(\mathcal T_{B})$.

\vskip 3pt

Bearing in mind of our assumption, the structure Jacobi operator~$\R$ for the tube of type~$(\mathcal T_{B})$ is quadratic Killing, taking $Y \in T_{\alpha}$ in \eqref{eq: 6.1} yields
\begin{equation}\label{eq: 6.2}
\begin{split}
& - \p AX + A\p X + \alpha^{2} A \phi  X + 2\alpha^{2} \p AX - 3 \alpha  A \p AX + 3\alpha \phi X \\
& -3 \sum_{\nu=1}^{3} \Big [ \beta \en(X) \pn \x +3 \alpha g(\pn \xi, X) \xn  + \alpha \en(X) \pn \xi + \beta g(\pn \x, X) \xn \Big ]=0,
\end{split}
\end{equation}
where we have used $(\nabla_{X}A)\xi = \alpha \p AX - A \p AX$ and $\phi \phi_{\nu} \x = \p^{2}\xn = - \xn$. Furthermore, taking $X=\xi_{\mu} \in T_{\beta}$ in \eqref{eq: 6.2} follows
\begin{equation*}
\begin{split}
& - \p A \x_{\mu} + A\p \x_{\mu} + \alpha^{2} A \phi  \x_{\mu} + 2\alpha^{2} \p A \x_{\mu} - 3 \alpha  A \p A \x_{\mu} + 3\alpha \phi \x_{\mu} \\
& \quad   -3 \sum_{\nu=1}^{3} \Big [ \beta \en(\x_{\mu}) \pn \x +3 \alpha g(\pn \xi, \x_{\mu}) \xn  + \alpha \en(\x_{\mu}) \pn \xi + \beta g(\pn \x, \x_{\mu}) \xn \Big ] \\
&  = - \beta \p_{\mu}  \x + 2\alpha^{2} \beta \p_{\mu} \x + 3\alpha \phi_{\mu} \x -3\beta \p_{\mu} \x  - 3 \alpha \p_{\mu}  \xi \\
&  = - 4 \beta \p_{\mu} \x + 2\alpha^{2} \beta \p_{\mu} \x = 2 \beta ( \alpha^{2} -2) \p_{\mu} \x = 0,
\end{split}
\end{equation*}
which implies $\beta ( \alpha^{2} -2) =0$. Since $\beta = 2 \cot(2r)$ for $r \in (0,\frac{\pi}{4}) $, we obtain $\alpha^{2}=2$.

\vskip 3pt

On the other hand, taking $X \in T_{\lambda}$ in \eqref{eq: 6.2}, together with $\phi T_{\lambda}=T_{\mu}$, provides
\begin{equation*}
\begin{split}
& \quad \  - \lambda \p X + \mu \p X + \alpha^{2} \mu \phi  X + 2\alpha^{2} \lambda \p X - 3 \alpha \lambda \mu   \p X + 3\alpha \phi X \\
&  = (-\lambda + \mu + 2 \mu + 4 \lambda + 3 \alpha + 3\alpha ) \phi X \\
&  = 3(\lambda + \mu + 2\alpha)\phi X = 3 (\beta + 2 \alpha) \phi X = 0,
\end{split}
\end{equation*}
where we have used $\alpha^{2}=2$, $ \lambda \mu  = \cot r  \cdot (-\tan r) =-1 $, and $\lambda + \mu = 2 \cot (2t) = \beta$.

\vskip 3pt

Applying a method to \eqref{eq: 6.2} that done above, the left side of \eqref{eq: 6.2} according to each eigenspace of type $(\mathcal T_{\beta})$ is given as
\begin{equation*}
\mathrm{Left\  Side \  of \  } \eqref{eq: 6.2} = \left\{ \begin{array}{ll}
0 & \mbox{for} \ X \in T_{\alpha} \\
2 \beta (\alpha^{2} -2) \phi_{\mu} \x & \mbox{for} \ X =\x_{\mu} \in T_{\beta} \\
-6 (\beta + 2 \alpha) \xi_{\mu} & \mbox{for} \ X=\phi_{\mu} \xi \in T_{\gamma} \\
3 (\beta + 2 \alpha) \phi X & \mbox{for} \ X \in T_{\lambda} \\
3 (\beta + 2 \alpha) \phi X & \mbox{for} \ X \in T_{\mu}. \\
\end{array}\right.
\end{equation*}

\vskip 6pt

Now, as the other case we consider the case $Y \in T_{\lambda}$. Then, by using $JT_{\lambda}=T_{\mu}$ and $\mathcal J T_{\lambda}=T_{\lambda}$, equation \eqref{eq: 6.1} is rearranged as
\begin{equation}\label{eq: 6.3}
\begin{split}
& g(X, A\p Y) \xi -g(X, \p AY) \xi - \e(X) \p AY  + \e(X) A \p  Y + \alpha^{2} \eta(X) A \phi Y \\
& + 3\alpha (\nabla_X A)Y + \alpha g(\phi X, Y)\xi +\alpha^{2}g(X,A \p Y)\xi \\
& -\alpha^{2} g(\p AY,X)\xi - 2 \alpha g(\phi Y,X)\xi  - \alpha^{2} \eta(X) \p AY \\
& + \sum_{\nu=1}^{3} \Big [ -g(\pn AX, Y) \xn  -3 g(\pn AX, \p Y) \pn \xi - g(\pn AY, X) \xn  \\
& \quad \quad \quad  \ - \en(X) \pn AY -3 g(\pn AY, \p X) \pn \xi + 3g(\pn \x, X) \pn \p AY  \\
& \quad \quad \quad  \  + \en(X) A \pn Y + \alpha g(\phi_\nu X, Y) \xn  + 3g(\pn \x, X) A \p \pn Y  \\
& \quad \quad \quad  \   - \alpha g(\phi_\nu\phi X, Y) \pn \xi + \alpha g(\pn \xi, X) \phi \phi_\nu Y \\
& \quad \quad \quad  \  - 2 \alpha g(\phi_\nu Y,X)\xi_\nu  + \alpha g( \phi_\nu \xi, X) \phi_\nu\phi Y \Big ] \\
& =  (\mu - \lambda - 3 \alpha + \alpha^{2}\mu - \alpha^{2} \lambda) g(X, \phi Y) \xi + (\lambda + \mu + \alpha^{2} \mu - \alpha^{2} \lambda ) \eta(X) \phi Y  \\
& \quad + 3 \alpha (\nabla_{X}A) Y + \sum_{\nu=1}^{3} \Big [- 3\alpha g(\phi_{\nu}Y, X)\xi_{\nu}  + (3\mu + 3\lambda - \alpha) g(\p \p_{\nu}Y, X) \p_{\nu}\xi \Big ] \\
& \quad  + \sum_{\nu=1}^{3} (3\lambda + 3 \mu + 2 \alpha) g(\phi_{\nu}\xi, X) \p \p_{\nu} Y =0
\end{split}
\end{equation}
for any tangent vector field $X$ on type~$(\mathcal T_{B})$. Restricting $X \in T_{\alpha}$ in \eqref{eq: 6.3} provides
\begin{equation}\label{eq: 6.4}
 (\lambda + \mu + \alpha^{2} \mu - \alpha^{2} \lambda ) \phi Y + 3 \alpha (\nabla_{\xi}A) Y
=0
\end{equation}
for any $Y \in T_{\lambda}$. By the equation of Codazzi \eqref{eq: 3.8}, we get
\begin{equation*}
\begin{split}
(\nabla_{\xi} A)Y & = (\nabla_Y A)\xi + \phi Y + \sum_{\nu=1}^3 \big\{- \eta_\nu(Y)\phi_\nu \xi - 3g(\phi_\nu \xi,Y)\xi_\nu  \big\} \\
&  = \alpha \phi AY - A \phi AY + \phi Y = (\alpha \lambda - \lambda \mu + 1) \phi Y
\end{split}
\end{equation*}
for any $Y \in T_{\lambda}$. From this, \eqref{eq: 6.4} becomes
\begin{equation*}
 (\lambda + \mu + \alpha^{2} \mu - \alpha^{2} \lambda + 3 \alpha^{2} \lambda - 3 \alpha \lambda \mu + 3 \alpha) \phi Y
=0.
\end{equation*}
Since $\alpha^{2}=2$, $\beta+2\alpha=0$, $\lambda + \mu =\beta$ and $\lambda \mu = -1$, the previous equation gives
\begin{equation}
\begin{split}
\beta + 2 \mu + 4 \lambda + 6 \alpha = -2 (\beta - \mu -2 \lambda)=0,
\end{split}
\end{equation}
which gives us a contradiction. In fact, by Proposition~$\rm B$ we see that $\beta=2 \cot (2r)$, $\lambda=\cot(r)$ and $\mu=-\tan(r)$ where $r \in (0, \frac{\pi}{4})$. From this, we get
$$
\beta - \mu -2 \lambda = - \frac{1}{\tan r},
$$
which means that the function $\beta - \mu -2 \lambda$ is non-vanishing for any $r \in (0, \frac{\pi}{4})$.

\vskip 6pt

Summing up those documents in this section, we can assert that {\it there does not exist a Hopf real hypersurface in complex two-plane Grassmannians $\GBt$, $m \geq 3$, with quadratic Killing structure Jacobi operator when the normal vector field of $M$ is of type $JN \bot \mathcal J N$}.

\vskip 17pt

\end{document}